\newtheorem{theorem}{Theorem}
\newtheorem{remark}[theorem]{Remark}
\theoremstyle{remark}
\newcommand{\id}{ {1\!\!\!\:1 } }
\begin{document}

\title{Existence theorems in the geometrically  non-linear 6-parametric theory of elastic plates}

\author{Mircea B\^{\i}rsan%
\thanks{\,Mircea B\^{\i}rsan,  Fakult\"at f\"ur  Mathematik, Universit\"at Duisburg-Essen, Campus Essen, Universit\"atsstr. 2, 45141 Essen, Germany, email: mircea.birsan@uni-due.de ; and
Department of Mathematics, University ``A.I. Cuza'' of Ia\c{s}i, 700506 Ia\c{s}i,  Romania}\,\,\,
and\, Patrizio Neff\,%
\thanks{Corresponding author: Patrizio Neff, Lehrstuhl f\"ur Nichtlineare Analysis und Modellierung, Fakult\"at f\"ur  Mathematik, Universit\"at Duisburg-Essen, Campus Essen, Universit\"atsstr. 2, 45141 Essen, Germany, email: patrizio.neff@uni-due.de, Tel.: +49-201-183-4243}
}


\maketitle

\begin{abstract}
In this paper we show the existence of global minimizers for the geometrically exact, non-linear equations of elastic plates, in the framework of the general 6-parametric shell theory. A characteristic feature of this model for shells is the appearance of two independent kinematic fields: the translation vector field and the rotation tensor field (representing in total 6 independent scalar kinematic variables). For isotropic plates, we prove the existence theorem by applying the direct methods of the calculus of variations. Then, we generalize our existence result to the case of anisotropic plates. We also present a detailed comparison with a previously established Cosserat plate model.
\end{abstract}

\noindent\textbf{Keywords:} elastic plates, geometrically non-linear  plates,   shells, existence of minimizers, 6-parametric shell theory, Cosserat plate.
\smallskip

\noindent \textbf{Mathematics Subject Classification  (MSC 2010):} 74K20, 74K25, 74G65, 74G25.

\section{Introduction}\label{sect1}

In this paper we present an existence theorem for the geometrically non-linear equations of elastic plates, in the framework of the  6-parametric shell theory.

The general (6-parametric) non-linear theory of shells, originally proposed by Reissner \cite{Reissner74}, has been considerably developed in the last 30 years. This theory and the most results in the field have been presented in the books of Libai and Simmonds \cite{Libai98} and recently  Chr\'o\'scielewski, Makowski and  Pietraszkiewicz \cite{Pietraszkiewicz-book04}.
The model is based on a dimension-reduction procedure of the three-dimensional formulation of the problem to the two-dimensional one, and is expressed through stress resultants and work-averaged deformation fields defined on the shell base surface. Thus, the local equilibrium equations for shells are derived by an exact through-the-thickness integration of the three-dimensional independent  balance laws for linear momentum and angular momentum. The deformation of the shell is characterized by two independent kinematic fields: the translation (displacement) vector   and the rotation tensor. The appearance of the rotation tensor as an independent kinematic field variable is one of the most characteristic features of this general shell theory. In this respect, we mention that the kinematical structure of the non-linear  6-parametric shell theory is identical to that of the Cosserat shell model (i.e., the material surface with a triad of rigidly rotating directors attached to any point) proposed initially by the Cosserat brothers \cite{Cosserat09neu} and developed subsequently by Zhilin \cite{Zhilin76}, Zubov \cite{Zubov97}, Altenbach and Zhilin \cite{Altenbach04}, Eremeyev and Zubov \cite{Eremeyev08}, B\^{\i}rsan and Altenbach \cite{Birsan10}, among others. A related Cosserat shell-model has been establish recently by Neff \cite{Neff_plate04_cmt,Neff_plate07_m3as} using the so-called derivation approach.

The subject of derivation and justification of the non-linear, geometrically exact equations for plates and shells has been treated in many works, and the
existence of solutions has been investigated
using a variety of methods, such as the method of formal asymptotic expansions or the $\Gamma$-convergence analysis, see e.g. \cite{Simo89.1,Simo92,Sansour92,Aganovic06,Aganovic07,Tiba02,Paroni06,Paroni06b}.
For an extensive treatment of this topic, as well as many bibliographic references, we refer to the books of Ciarlet \cite{Ciarlet97,Ciarlet00}.
To the best of our knowledge, one cannot find  in the literature   any  existence theorem  for the non-linear 6-parametric  theory of plates or shells developed in \cite{Libai98,Pietraszkiewicz-book04}.  In our work, we describe the non-linear equations of elastic plates as a two-fields minimization problem of the total potential energy and we prove the existence of minimizers by applying the direct methods of the calculus of variations. The first result concerning the existence of minimizers for a geometrically exact (6-parametric) Cosserat plate model has been presented by the second author in \cite{Neff_plate04_cmt,Neff_plate07_m3as}. Due to differences in notation, this result has not been much noticed. In this paper, we employ similar techniques as in \cite{Neff_plate04_cmt} and adapt the existence proof to the general 6-parametric plate equations.

In the framework of the linearized 6-parametric theory, the existence of weak solutions for  micropolar elastic shells has been proved recently in \cite{EremeyevLebedev11}. We mention that the kinematic structure of the general 6-parametric shell theory differs from that of the so-called Cosserat surfaces, i.e. material surfaces with one or more deformable directors \cite{Green65,Naghdi72,Antman95}. In particular, the kinematics of Cosserat surfaces with one deformable director \cite{Naghdi72,Rubin00} leaves indefinite the drilling rotation about the director, while the general 6-parametric shell theory is able to take into  account such drilling rotation.
For the linear theory of Cosserat surfaces, the existence theorems have been established in \cite{Davini75,Birsan08,BirsanZAMM08}.

Here is the outline of our paper: In Section \ref{sect2} we briefly review the field equations of the non-linear 6-parametric plate theory. Then, in Section \ref{sect3} we prove the existence theorem for isotropic plates. The generalization of the existence result for anisotropic plates is presented in Section \ref{sect4}. We also show that the existence theorem remains valid in the case of some alternative relaxed boundary conditions for the rotation field. In Section \ref{sect5} we present a detailed comparison between the non-linear 6-parametric plate and the Cosserat plate model proposed and investigated by the second author in \cite{Neff_plate04_cmt,Neff_plate07_m3as}. Although this Cosserat model for plates has been obtained independently by a   formal dimensional reduction of a finite-strain three-dimensional micropolar model (see also \cite{Neff_Forest07}), we show here that the kinematical variables and the strain measures of the two models essentially coincide. Moreover, the expressions of the elastic strain energies  become identical, provided we make a suitable identification of the constitutive coefficients for isotropic plates in the two approaches.

The linearized version of this model has also been investigated in \cite{Neff_plate04_cmt,Neff_plate07_m3as,Neff_Danzig05,Neff_Chelminski_ifb07,Neff_Hong_Reissner08} and its relations to the Reissner--Mindlin, Kirchhoff--Love and other classical plate models have been discussed.

\section{Basic equations of geometrically exact  elastic plates}\label{sect2}

The governing equations   of the general 6-parametric non-linear theory of shells have been derived  in \cite{Libai98,Pietraszkiewicz-book04,Antman95} by direct integration of the two independent  fundamental principles of continuum mechanics: the three-dimensional balance laws of linear momentum and angular momentum. In this section we summarize these equations, specialized here for the case of plates. To this aim, we employ mainly the notations introduced in \cite{Libai98,Pietraszkiewicz04,Pietraszkiewicz-book04}.

Consider an elastic plate, which is a three-dimensional body  identified in the reference (undeformed) configuration with a region $\Omega=\{(x_1,x_2,z)\,|\,(x_1,x_2)\in \omega,\,z\in \big[-\frac{h}{2}\,,\frac{h}{2}\,\big]\}$ of the Euclidean space. Here $h>0$ is the thickness of the plate and $\omega\subset \mathbb{R}^2$ is a bounded, open domain with Lipschitz boundary $\partial\omega$. Relative to an inertial frame $(O,\boldsymbol{e}_i)$, with $\boldsymbol{e}_i$ orthonormal vectors ($i=1,2,3$), the position vector $\boldsymbol{p}$ of any point of $\Omega$ can be written as
\begin{equation}\label{1}
    \boldsymbol{p}(x_\alpha,z)=\boldsymbol{x}+z\,\boldsymbol{e}_3\,,\quad \boldsymbol{x}=x_\alpha\boldsymbol{e}_\alpha\,,\quad (x_1,x_2)\in\omega,\quad z\in\big[-\frac{h}{2}\,,\frac{h}{2}\,\big].
\end{equation}
Throughout this paper, we employ the usual convention of summation over repeated indices. The Latin indices $i,j,...$ take the values $\{1,2,3\}$ and the Greek indices $\alpha,\beta,...$ range over the set $\{1,2\}$.

\begin{figure}
\begin{center}
\includegraphics[scale=1]{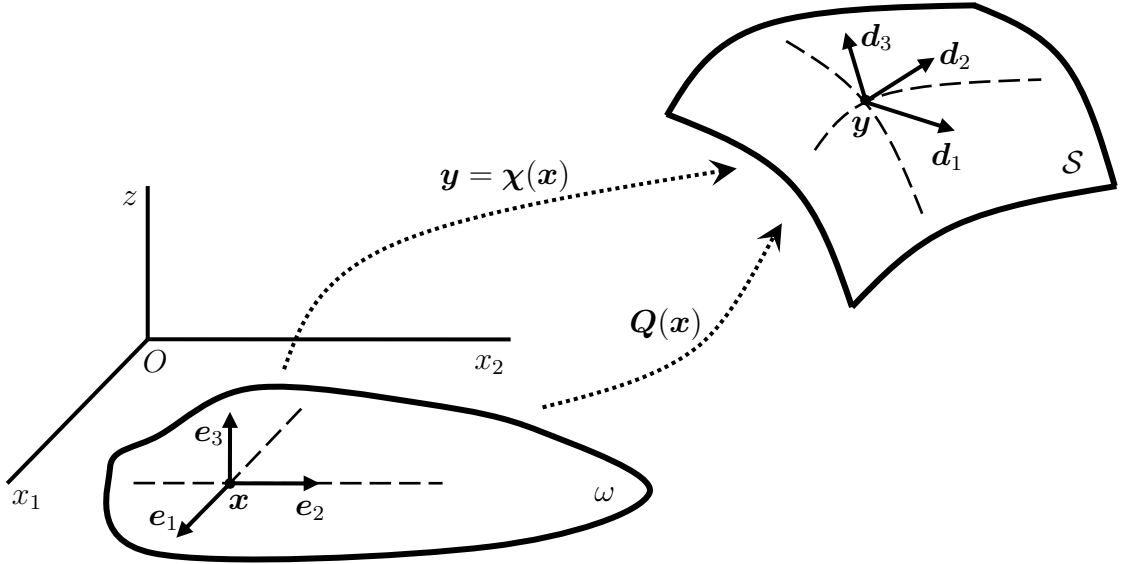}
\put(-369,77){$O$} \put(-418,27){$x_1$} \put(-245,78){$x_2$} \put(-377,140){$z$}
\put(-337,25){$\boldsymbol{x} $} \put(-367,19){$\boldsymbol{e}_1$} \put(-312,23){$\boldsymbol{e}_2$} \put(-350,51){$\boldsymbol{e}_3$}
\put(-104,168){$\boldsymbol{y} $} \put(-74,155){$\boldsymbol{d}_1$} \put(-71,193){$\boldsymbol{d}_2$} \put(-100.5,200){$\boldsymbol{d}_3$}
\put(-187,92){$\boldsymbol{Q}( \boldsymbol{x})$} \put(-258,148){$\boldsymbol{y}=\boldsymbol{\chi}(\boldsymbol{x})$}
\put(-200,28){$\omega$} \put(-25,152){$\mathcal{S}$}
\caption{The reference base surface $\,\omega\,$ of the plate and the deformed surface $\mathcal{S}$, described by the surface deformation mapping $\boldsymbol{y}=\boldsymbol{\chi}(\boldsymbol{x})$ and the independent rotation tensor field $\boldsymbol{Q}(\boldsymbol{x})$.}
\label{Fig1}       
\end{center}
\end{figure}

In the deformed configuration, the base surface of the plate (shell) is represented by the position vector $\boldsymbol{y}=\boldsymbol{\chi}(\boldsymbol{x})$, where $\,\boldsymbol{\chi}:\omega\subset \mathbb{R}^2\rightarrow\mathbb{R}^3$ is the surface deformation mapping. Let the vector field $\boldsymbol{u}(\boldsymbol{x})$ represent the  translations (displacements) and the proper orthogonal tensor field $\boldsymbol{Q}(\boldsymbol{x})$ designate the rotations of the shell cross-sections. Then the deformed configuration of the plate is given by
\begin{equation}\label{2}
    \boldsymbol{y}=\boldsymbol{\chi}(\boldsymbol{x})=\boldsymbol{x}+\boldsymbol{u}(\boldsymbol{x}),\qquad \boldsymbol{d}_i=\boldsymbol{Q}\,\boldsymbol{e}_i\,,\quad i=1,2,3.
\end{equation}
The vectors $\boldsymbol{d}_i$ introduced in \eqref{2} are three orthonormal directors (see Figure 1) attached to any point of the deformed base surface $\mathcal{S}=\boldsymbol{\chi}(\omega)$. Thus, the rotation field $\boldsymbol{Q}(\boldsymbol{x})\in SO(3)$ can be written as
\begin{equation}\label{3}
   \boldsymbol{Q}=\boldsymbol{d}_i\otimes\boldsymbol{e}_i\,.
\end{equation}

According to the Lagrangian description, let $\boldsymbol{f}$ and $\boldsymbol{c}$ be the external surface resultant force and couple vectors applied at any point $\boldsymbol{y}\in\mathcal{S}$, but measured per unit area of $\omega$. Also, let $\boldsymbol{n}_\nu=\boldsymbol{N}\boldsymbol{\nu}$ and $\boldsymbol{m}_\nu=\boldsymbol{M}\boldsymbol{\nu}$ be the internal contact stress and couple resultant vectors defined at an arbitrary boundary curve $\partial G\subset \mathcal{S}$, but measured per unit length of the undeformed boundary $\partial \gamma\subset\omega$. (We have denoted here by $G=\boldsymbol{\chi}(\gamma)$ and $\boldsymbol{\nu}$ is the external unit normal vector to $\partial \gamma$ lying in the plane of $\omega$.) Here, the tensors $\boldsymbol{N}=N_{i\alpha}\boldsymbol{e}_i\otimes\boldsymbol{e}_\alpha$ and $\boldsymbol{M}=M_{i\alpha}\boldsymbol{e}_i\otimes\boldsymbol{e}_\alpha$ are the internal surface stress resultant and stress couple resultant tensors (of the first Piola--Kirchhoff stress tensor type).
Then, the local equilibrium equations are given in the form \cite{Pietraszkiewicz-book02,Pietraszkiewicz-book04}
\begin{equation}\label{4}
    \mathrm{Div}_s\, \boldsymbol{N}+\boldsymbol{f}=\boldsymbol{0},\qquad \mathrm{Div}_s\, \boldsymbol{M} + \mathrm{axl}(\boldsymbol{N}\boldsymbol{F}^T-\boldsymbol{F}\boldsymbol{N}^T)
    +\boldsymbol{c}=\boldsymbol{0},
\end{equation}
where $\boldsymbol{F}= \mathrm{Grad}_s\boldsymbol{y}= \boldsymbol{y}_{,\alpha}\otimes\boldsymbol{e}_\alpha$ is the surface gradient of deformation and $\mathrm{Div}_s \boldsymbol{N}=N_{i\alpha,\alpha} \boldsymbol{e}_i$ ,
$\mathrm{Div}_s \boldsymbol{M}=M_{i\alpha,\alpha} \boldsymbol{e}_i\,$. As usual, we denote the partial derivative with respect to $x_\alpha$ by $(\cdot)_{,\alpha}=\frac{\partial}{\partial x_\alpha}\,(\cdot)$.
In \eqref{4} the superscript $(\,\cdot)^T$ denotes the transpose and $\mathrm{axl}(\boldsymbol{A})$ is the axial vector of any three-dimensional skew-symmetric tensor $\boldsymbol{A}$, given by
\begin{equation}\label{5}
    \mathrm{axl}(\boldsymbol{A})= A_{32}\boldsymbol{e}_1+A_{13}\boldsymbol{e}_2+ A_{21}\boldsymbol{e}_3\,,\quad\mathrm{for}\quad \boldsymbol{A}=A_{ij}\boldsymbol{e}_i\otimes\boldsymbol{e}_j\,,\quad \boldsymbol{A}^T=-\boldsymbol{A},
\end{equation}
such that $\,\,\boldsymbol{A}\,\boldsymbol{v}=\mathrm{axl}(\boldsymbol{A})\times \boldsymbol{v}$, for any vector $\boldsymbol{v}$. The corresponding weak form of the local balance equations has been presented in \cite{Libai98} Chap. VIII, or in \cite{Pietraszkiewicz04}.

To formulate the boundary conditions, we take a disjoint partition of the boundary curve $\partial \omega=\partial \omega_d\cup \partial \omega_f$ , $\partial \omega_d\cap \partial \omega_f=\emptyset$, with length$(\partial \omega_d)>0$. We consider the following boundary conditions \cite{Pietraszkiewicz-book04,Pietraszkiewicz11}
\begin{equation}\label{6}
    \boldsymbol{u}-\boldsymbol{u}^*=\boldsymbol{0},\qquad \boldsymbol{Q}-\boldsymbol{Q}^*=\boldsymbol{0}\quad\mathrm{along}\,\,\,\partial\omega_d\,,
\end{equation}
\begin{equation}\label{7}
    \boldsymbol{N}\boldsymbol{\nu}-\boldsymbol{n}^*=\boldsymbol{0},\qquad \boldsymbol{M}\boldsymbol{\nu}-\boldsymbol{m}^*=\boldsymbol{0}\quad\mathrm{along}\,\,\,\partial\omega_f\,,
\end{equation}
where $\boldsymbol{n}^*$ and $\boldsymbol{m}^*$ are the external boundary resultant force and couple vectors applied along the part $\partial\omega_f$ of the boundary $\partial\omega$.
In general, from a modelling point of view, it is a difficult task to specify boundary conditions for the rotation $\boldsymbol{Q}\,$ in \eqref{6} at the   boundary $\,\partial\omega_d\,$. In \cite{Pietraszkiewicz04} the following requirement is considered: in relations \eqref{6} the functions $\boldsymbol{u}^*$ and $\boldsymbol{Q}^*$ defined on $\partial\omega_d$ should be found from the Dirichlet boundary conditions $\boldsymbol{u}_{3D}(x_\alpha,z)=\boldsymbol{u}_{3D}^*(x_\alpha,z)$ for the three-dimensional body $\Omega$, at any point $(x_\alpha,z)\in \partial \Omega_d = \partial \omega_d\times[-\frac{h}{2},\frac{h}{2}]$. Thus, the functions $\boldsymbol{u}^*$ and $\boldsymbol{Q}^*$ should be determined from the condition that the work done along
$\partial\omega_d$ by the resultant stress and couple vectors $\boldsymbol{n}_{\nu\,}$, $\boldsymbol{m}_\nu$ on the translation $\boldsymbol{u}^*$ and rotation $\boldsymbol{Q}^*$ be the same as the work done along $\partial \Omega_d$ by the nominal three-dimensional stress vector $\boldsymbol{t}_\nu(x_\alpha,z)$ on the translation $\boldsymbol{u}_{3D}^*(x_\alpha,z)$.

In the general resultant theory of shells,  the   strain measures are the strain tensor $\boldsymbol{E}$ and the bending tensor $\boldsymbol{K}$, given by \cite{Pietraszkiewicz-book04,Pietraszkiewicz04,Eremeyev11}
\begin{equation}\label{8}
    \boldsymbol{E}= \boldsymbol{Q}^T(\boldsymbol{\varepsilon}_\alpha\otimes\boldsymbol{e}_\alpha),\qquad \boldsymbol{\varepsilon}_\alpha=\boldsymbol{y}_{,\alpha}- \boldsymbol{d}_\alpha\,,
\end{equation}
\begin{equation}\label{9}
    \boldsymbol{K}= \boldsymbol{Q}^T(\boldsymbol{\varkappa}_\alpha\otimes\boldsymbol{e}_\alpha),\qquad \boldsymbol{\varkappa}_\alpha=\mathrm{axl}(\boldsymbol{Q}_{,\alpha}\boldsymbol{Q}^T).
\end{equation}
We mention that the kinematical structure \eqref{8},\eqref{9} of the general shell theory is identical with that of the classical version of the Cosserat shell \cite{Cosserat09neu,Zhilin76,Altenbach04,Eremeyev08,Birsan10}.
In the case of plates, these strain tensors can be written in component form relative to the tensor basis $\{\boldsymbol{e}_i\otimes\boldsymbol{e}_\alpha\}$ as
\begin{equation}\label{10}
      {}\hspace{-112pt}\boldsymbol{E}=E_{i\alpha}\boldsymbol{e}_i\otimes\boldsymbol{e}_\alpha= (\boldsymbol{y}_{,\alpha}\cdot\boldsymbol{d}_i-\delta_{i\alpha}) \boldsymbol{e}_i\otimes\boldsymbol{e}_\alpha\,,
\end{equation}
\begin{equation}\label{11}
\begin{array}{lr}
     \boldsymbol{K}=K_{i\alpha}\boldsymbol{e}_i\otimes\boldsymbol{e}_\alpha=\frac{1}{2}\,e_{ijk}(\boldsymbol{d}_{j,\alpha}\cdot\boldsymbol{d}_k) \boldsymbol{e}_i\otimes\boldsymbol{e}_\alpha \\ \quad
    \,\, = (\boldsymbol{d}_{2,\alpha}\cdot\boldsymbol{d}_3) \boldsymbol{e}_1\otimes\boldsymbol{e}_\alpha + (\boldsymbol{d}_{3,\alpha}\cdot\boldsymbol{d}_1) \boldsymbol{e}_2\otimes\boldsymbol{e}_\alpha + (\boldsymbol{d}_{1,\alpha}\cdot\boldsymbol{d}_2) \boldsymbol{e}_3\otimes\boldsymbol{e}_\alpha\,,
     \end{array}
\end{equation}
where $\delta_{i\alpha}$ is the Kronecker symbol and $e_{ijk}$ is the permutation symbol.

According to the hyperelasticity assumption,  the constitutive equations for elastic plates are given in the form \cite{Libai98,Pietraszkiewicz-book04,Eremeyev06}
\begin{equation}\label{12}
    \boldsymbol{N}=\boldsymbol{Q}\,\dfrac{\partial\, W}{\partial \boldsymbol{E}}\,\,,\qquad \boldsymbol{M}=\boldsymbol{Q}\,\dfrac{\partial\, W}{\partial \boldsymbol{K}}\,\,,
\end{equation}
where
\begin{equation}\label{13}
    W=W(\boldsymbol{E},\boldsymbol{K})
\end{equation}
is the strain energy density.
Then, the equations \eqref{4} are the Euler-Lagrange equations corresponding to the minimization problem of the total potential energy.
In order to characterize the material of the elastic plate, one has to specify the expression of the potential energy function \eqref{13}.
In the paper \cite{Eremeyev06} the conditions for invariance of the strain energy  under change of the reference placement are discussed and the local symmetry group is established. The structure of the local symmetry group puts some constraints on the form of $W$, which allows one to simplify the expression of $W$. From this representation, the strain energy density corresponding to physically linear isotropic plates is given by (see \cite{Eremeyev06}, Sect. 10)
\begin{equation}\label{14}
\begin{array}{c}
    W(\boldsymbol{E},\boldsymbol{K})= W_{\mathrm{mb}}(\boldsymbol{E})+ W_{\mathrm{bend}}(\boldsymbol{K}),\\
    2 W_{\mathrm{mb}}(\boldsymbol{E})=  \alpha_1\mathrm{tr}^2 \boldsymbol{E}_{\parallel} +\alpha_2  \mathrm{tr} \boldsymbol{E}^2_{\parallel}    + \alpha_3 \mathrm{tr}(\boldsymbol{E}_{\parallel}^T  \boldsymbol{E}_{\parallel} )  + \alpha_4     \boldsymbol{n} \boldsymbol{E} \boldsymbol{E}^T  \boldsymbol{n}, \\
    2 W_{\mathrm{bend}}(\boldsymbol{K})= \beta_1\mathrm{tr}^2 \boldsymbol{K}_{\parallel} +\beta_2  \mathrm{tr} \boldsymbol{K}^2_{\parallel}    + \beta_3 \mathrm{tr}(\boldsymbol{K}_{\parallel}^T  \boldsymbol{K}_{\parallel} )  + \beta_4     \boldsymbol{n} \boldsymbol{K} \boldsymbol{K}^T  \boldsymbol{n}, \\
    \boldsymbol{E}_{\parallel}= \boldsymbol{E}- (\boldsymbol{n}\otimes\boldsymbol{n}) \boldsymbol{E},\qquad \boldsymbol{K}_{\parallel}= \boldsymbol{K}- (\boldsymbol{n}\otimes\boldsymbol{n}) \boldsymbol{K},\qquad \boldsymbol{n}=\boldsymbol{e}_3\,,
\end{array}
\end{equation}
where the coefficients $\alpha_k$ , $\beta_k$ ($k=1,2,3,4$) are constant material parameters.

\begin{remark}\label{rem1}
In the works \cite{Pietraszkiewicz-book04,Pietraszkiewicz10} the authors have employed a particular form of the expression \eqref{14} for the strain energy density in the isotropic homogeneous case, namely
\begin{equation}\label{15}
\begin{array}{l}
    2W(\boldsymbol{E},\boldsymbol{K})= \,\,\,C\big[\,\nu \,\mathrm{tr}^2 \boldsymbol{E}_{\parallel} +(1-\nu)\, \mathrm{tr}(\boldsymbol{E}_{\parallel}^T  \boldsymbol{E}_{\parallel} )\big]  + \alpha_{s\,}C(1-\nu) \, \boldsymbol{n} \boldsymbol{E} \boldsymbol{E}^T  \boldsymbol{n} \\
    \qquad\qquad\qquad\,\, +\,D \big[\,\nu\,\mathrm{tr}^2 \boldsymbol{K}_{\parallel} + (1-\nu)\, \mathrm{tr}(\boldsymbol{K}_{\parallel}^T  \boldsymbol{K}_{\parallel} )\big]  + \alpha_{t\,}D(1-\nu) \,    \boldsymbol{n} \boldsymbol{K} \boldsymbol{K}^T  \boldsymbol{n},
\end{array}
\end{equation}
where the coefficients are given by
\begin{equation}\label{16}
    C=\dfrac{E\,h}{1-\nu^2}\,,\qquad D=\dfrac{E\,h^3}{12(1-\nu^2)}\,,\qquad \alpha_s=\dfrac{5}{6}\,,\qquad \alpha_t=\dfrac{7}{10}\,.
\end{equation}
Here $E$ is the Young modulus, $\nu$ is the Poisson ratio, $C$ is the stretching (membrane) stiffness of the plate, and $D$ is the bending stiffness. The values of the two shear correction factors $\alpha_s$ and $\alpha_t$ from \eqref{16} have been determined in \cite{Pietraszkiewicz10} through a numerical treatment of several non-linear shell structures. We observe that the form \eqref{15} of the strain energy density $W$ can be obtained from the more general representation \eqref{14} by choosing  the following coefficients
\begin{equation}\label{17}
\begin{array}{l}
    \alpha_1=C\nu=\,\dfrac{2\lambda\mu}{\lambda\!+\!2\mu}\,h,\quad \alpha_2=0,\quad \alpha_3=C(1\!-\!\nu)=2\mu h, \quad \alpha_4=\alpha_sC(1\!-\!\nu)=2\mu\alpha_s h,\\
    \beta_1=D\nu=\,\dfrac{\lambda\mu}{\lambda\!+\!2\mu}\,\dfrac{h^3}{6},\quad\! \beta_2=0,\quad\!\! \beta_3=D(1\!-\!\nu)=\dfrac{\mu h^3}{6}\,, \quad \! \beta_4=\alpha_tD(1\!-\!\nu)=\dfrac{\mu\alpha_t h^3}{6},\\
\end{array}
\end{equation}
where $\mu$  and $\lambda$  are the elastic Lam\'e moduli of the isotropic and homogeneous material.
\end{remark}

\section{Existence theorem for isotropic plates}\label{sect3}

We employ the usual notations for the Lebesgue space $L^2(\omega)$ and the Sobolev space $H^1(\omega)$, endowed with their usual norms $\|\cdot\|_{L^2(\omega)}$ and $\|\cdot\|_{H^1(\omega)}\,$. We denote the set of proper orthogonal tensors by $SO(3)$ and designate the set of (three-dimensional) translation vectors by $\mathbb{R}^3$ and the set of second-order tensors by $\mathbb{R}^{3\times3}$. The functional spaces of vectorial or tensorial  functions will be denoted by $\boldsymbol{L}^2(\omega, \mathbb{R}^3)$, $\boldsymbol{H}^1(\omega, \mathbb{R}^3)$, and respectively $\boldsymbol{L}^2(\omega, \mathbb{R}^{3\times3})$, $\boldsymbol{H}^1(\omega, \mathbb{R}^{3\times3})$. For tensorial functions with range in $SO(3)$, we employ the notations  $\boldsymbol{L}^2(\omega, SO(3))$ and  $\boldsymbol{H}^1(\omega, SO(3))$.
We also use  the classical notations for the norms $\|\boldsymbol{v}\|=(\boldsymbol{v}\cdot\boldsymbol{v})^{1/2}, \forall \,\boldsymbol{v}\in \mathbb{R}^3$, and $\|\boldsymbol{X}\|^2=\mathrm{tr}(\boldsymbol{X}\boldsymbol{X}^T), \forall \,\boldsymbol{X}\in \mathbb{R}^{3\times 3}$.

Let us define the admissible set $\mathcal{A}$ by
\begin{equation}\label{18}
    \mathcal{A}=\big\{(\boldsymbol{y},\boldsymbol{Q})\in\boldsymbol{H}^1(\omega, \mathbb{R}^3)\times\boldsymbol{H}^1(\omega, SO(3))\,\,\big|\,\,\,  \boldsymbol{y}_{\big| \partial\omega_d}=\boldsymbol{y}^*, \,\,\boldsymbol{Q}_{\big| \partial\omega_d}=\boldsymbol{Q}^* \big\}.
\end{equation}
The boundary conditions in \eqref{18} are to be understood in the sense of traces.
We assume the existence of a function $\Lambda(\boldsymbol{u},\boldsymbol{Q})$ representing the potential of the external surface loads $\boldsymbol{f}, \boldsymbol{c}$, and boundary loads $\boldsymbol{n}^*, \boldsymbol{m}^*$ \cite{Pietraszkiewicz04}.

Consider the   two-field minimization problem associated to the deformation of elastic plates: find the pair $(\hat{\boldsymbol{y}},\hat{\boldsymbol{Q}})\in \mathcal{A}$ which realizes the minimum of the functional
\begin{equation}\label{19}
    I(\boldsymbol{y},\boldsymbol{Q})=\int_\omega W(\boldsymbol{E},\boldsymbol{K})\,\mathrm{d}\omega - \Lambda(\boldsymbol{u},\boldsymbol{Q})\qquad\mathrm{for}\qquad (\boldsymbol{y},\boldsymbol{Q})\in \mathcal{A}.
\end{equation}
Here the strain tensor $\boldsymbol{E}$ and the bending tensor $\boldsymbol{K}$ are expressed in terms of $(\boldsymbol{y},\boldsymbol{Q})$ by relations \eqref{8} and \eqref{9}.
The variational principle of total potential energy relative to the functional \eqref{19} has been presented in \cite{Pietraszkiewicz04}, Sect.2.

The external loading potential $\Lambda(\boldsymbol{u},\boldsymbol{Q})$  is decomposed additively
\begin{equation}\label{20}
    \Lambda(\boldsymbol{u},\boldsymbol{Q})=\Lambda_\omega(\boldsymbol{u},\boldsymbol{Q}) + \Lambda_{\partial\omega_f}(\boldsymbol{u},\boldsymbol{Q}),
\end{equation}
where $\Lambda_\omega(\boldsymbol{u},\boldsymbol{Q})$ is the potential of the external surface loads $\boldsymbol{f}, \boldsymbol{c}$, while $\Lambda_{\partial\omega_f}(\boldsymbol{u},\boldsymbol{Q})$ is the potential of the external boundary loads $\boldsymbol{n}^*, \boldsymbol{m}^*$, which are taken in the form
\begin{equation}\label{21}
\begin{array}{l}
    \Lambda_\omega(\boldsymbol{u},\boldsymbol{Q})= \displaystyle{\int_\omega} \boldsymbol{f}\cdot \boldsymbol{u}\, \mathrm{d}\omega + \Pi_\omega(\boldsymbol{Q}), \\
    \Lambda_{\partial\omega_f}(\boldsymbol{u},\boldsymbol{Q})= \displaystyle{\int_{\partial\omega_f}} \boldsymbol{n}^*\cdot \boldsymbol{u}\, \mathrm{d}s + \Pi_{\partial\omega_f}(\boldsymbol{Q}).
    \end{array}
\end{equation}
The load potential functions $\,\,\Pi_\omega:\boldsymbol{L}^2(\omega,SO(3))\rightarrow\mathbb{R}$ and $\,\,\Pi_{\partial\omega_f}:\boldsymbol{L}^2( \omega ,SO(3))\rightarrow\mathbb{R}$  are assumed to be continuous and bounded operators, whose expressions are not given explicitly.

We are now able to present the main existence result concerning the deformation of isotropic elastic plates.
\begin{theorem}\label{th1}
Assume that the external loads and the boundary data satisfy the regularity conditions
\begin{equation}\label{22}
    \boldsymbol{f}\in\boldsymbol{L}^2(\omega,\mathbb{R}^3),\quad  \boldsymbol{n}^*\in \boldsymbol{L}^2(\partial\omega_f,\mathbb{R}^3), \quad \boldsymbol{y}^*\in\boldsymbol{H}^1(\omega ,\mathbb{R}^3),\quad \boldsymbol{Q}^*\in\boldsymbol{H}^1(\omega, SO(3)).
\end{equation}
Consider the minimization problem \eqref{18}, \eqref{19} for isotropic plates, i.e. when the strain energy density
$W$ is given by the relations \eqref{14}. If the constitutive coefficients satisfy the conditions
\begin{equation}\label{23}
\begin{array}{l}
    2\alpha_1+\alpha_2+\alpha_3>0,\quad \alpha_2+\alpha_3>0,\quad \alpha_3-\alpha_2>0, \quad\alpha_4>0,\\
    2\beta_1+\beta_2+\beta_3>0,\quad \beta_2+\beta_3>0,\quad \beta_3-\beta_2>0,\quad  \beta_4>0,
\end{array}
\end{equation}
then the problem \eqref{18}, \eqref{19} admits at least one minimizing solution pair $(\hat{\boldsymbol{y}},\hat{\boldsymbol{Q}})\in \mathcal{A}$.
\end{theorem}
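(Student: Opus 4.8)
The plan is to apply the direct method of the calculus of variations. First I would establish \emph{coercivity}: the structural conditions \eqref{23} are exactly the algebraic conditions ensuring that the quadratic forms $W_{\mathrm{mb}}(\boldsymbol{E})$ and $W_{\mathrm{bend}}(\boldsymbol{K})$ are positive definite on $\mathbb{R}^{3\times 2}$ (viewing $\boldsymbol{E},\boldsymbol{K}$ as tensors whose third column vanishes). Indeed, decomposing $\boldsymbol{E}_\parallel$ into its trace part, symmetric-deviatoric part, and skew part on the $2\times 2$ block, the three combinations $2\alpha_1+\alpha_2+\alpha_3$, $\alpha_3-\alpha_2$, $\alpha_2+\alpha_3$ are the eigenvalues of the associated form, and $\alpha_4>0$ controls the remaining row $\boldsymbol{n}\boldsymbol{E}\boldsymbol{E}^T\boldsymbol{n}=|\boldsymbol{E}^T\boldsymbol{n}|^2$; similarly for the $\beta_k$. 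Hence there is $c_0>0$ with $W(\boldsymbol{E},\boldsymbol{K})\ge c_0\big(\norm{\boldsymbol{E}}^2+\norm{\boldsymbol{K}}^2\big)$.

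Next I would convert this pointwise coercivity into $\boldsymbol{H}^1$-coercivity of $I$ over $\mathcal{A}$. From \eqref{8}, $\norm{\boldsymbol{E}}^2=\sum_\alpha\norm{\boldsymbol{y}_{,\alpha}-\boldsymbol{d}_\alpha}^2$, and since $\boldsymbol{Q}\in SO(3)$ is pointwise bounded ($\norm{\boldsymbol{Q}}^2=3$), control of $\norm{\boldsymbol{E}}_{L^2}$ together with $\norm{\boldsymbol{Q}}_{L^2}$ gives control of $\norm{\mathrm{Grad}_s\boldsymbol{y}}_{L^2}$; the Dirichlet condition $\boldsymbol{y}=\boldsymbol{y}^*$ on $\partial\omega_d$ (with positive length) then yields a full $\boldsymbol{H}^1$ bound on $\boldsymbol{y}$ via Poincaré. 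For $\boldsymbol{Q}$, the key identity is $\boldsymbol{Q}_{,\alpha}=(\boldsymbol{\varkappa}_\alpha\times)\,\boldsymbol{Q}$ with $\boldsymbol{\varkappa}_\alpha$ the columns of $\boldsymbol{Q}\boldsymbol{K}$ up to the axl-isomorphism, so $\norm{\boldsymbol{Q}_{,\alpha}}$ is controlled pointwise by $\norm{\boldsymbol{K}}$ (again using $\norm{\boldsymbol{Q}}$ bounded); combined with $\norm{\boldsymbol{Q}}_{L^2(\omega)}^2=3\,|\omega|$ this gives a bound on $\norm{\boldsymbol{Q}}_{H^1}$. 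Finally the loading term $\Lambda$ is estimated: the linear-in-$\boldsymbol{u}$ pieces are bounded by $C\norm{\boldsymbol{u}}_{L^2}+C\norm{\boldsymbol{u}}_{L^2(\partial\omega_f)}\le\varepsilon\norm{\boldsymbol{y}}_{H^1}^2+C_\varepsilon$ using the trace theorem and Young's inequality, while $\Pi_\omega,\Pi_{\partial\omega_f}$ are bounded by hypothesis; absorbing the $\varepsilon$-term shows $I(\boldsymbol{y},\boldsymbol{Q})\ge c_1\big(\norm{\boldsymbol{y}}_{H^1}^2+\norm{\boldsymbol{Q}}_{H^1}^2\big)-c_2$, so $I$ is bounded below on $\mathcal{A}$ and any minimizing sequence is bounded in $\boldsymbol{H}^1(\omega,\mathbb{R}^3)\times\boldsymbol{H}^1(\omega,\mathbb{R}^{3\times 3})$.

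Then I would extract a weakly convergent subsequence $(\boldsymbol{y}^{(k)},\boldsymbol{Q}^{(k)})\rightharpoonup(\hat{\boldsymbol{y}},\hat{\boldsymbol{Q}})$ in $\boldsymbol{H}^1$, hence (Rellich) strongly in $\boldsymbol{L}^2$ and pointwise a.e. along a further subsequence. The a.e. limit preserves the constraint $\hat{\boldsymbol{Q}}(\boldsymbol{x})\in SO(3)$ (this is a closed pointwise condition), so $\hat{\boldsymbol{Q}}\in\boldsymbol{H}^1(\omega,SO(3))$; the boundary conditions pass to the limit by weak continuity of the trace operator, so $(\hat{\boldsymbol{y}},\hat{\boldsymbol{Q}})\in\mathcal{A}$. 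For the energy, the subtle point is that $\boldsymbol{E}=\boldsymbol{Q}^T(\boldsymbol{y}_{,\alpha}\otimes\boldsymbol{e}_\alpha)-(\text{something})$ and $\boldsymbol{K}=\boldsymbol{Q}^T(\mathrm{axl}(\boldsymbol{Q}_{,\alpha}\boldsymbol{Q}^T)\otimes\boldsymbol{e}_\alpha)$ are nonlinear in the fields, but they are products of a \emph{strongly} convergent factor ($\boldsymbol{Q}^{(k)}\to\hat{\boldsymbol{Q}}$ in $L^2$ and a.e., uniformly bounded) with a \emph{weakly} convergent factor ($\mathrm{Grad}_s\boldsymbol{y}^{(k)}\rightharpoonup\mathrm{Grad}_s\hat{\boldsymbol{y}}$, $\mathrm{Grad}_s\boldsymbol{Q}^{(k)}\rightharpoonup\mathrm{Grad}_s\hat{\boldsymbol{Q}}$ in $L^2$), so $\boldsymbol{E}^{(k)}\rightharpoonup\hat{\boldsymbol{E}}$ and $\boldsymbol{K}^{(k)}\rightharpoonup\hat{\boldsymbol{K}}$ in $\boldsymbol{L}^2(\omega,\mathbb{R}^{3\times 3})$. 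Since $\boldsymbol{E}\mapsto\int_\omega W_{\mathrm{mb}}(\boldsymbol{E})\,\mathrm{d}\omega$ and $\boldsymbol{K}\mapsto\int_\omega W_{\mathrm{bend}}(\boldsymbol{K})\,\mathrm{d}\omega$ are continuous convex functionals (nonnegative quadratic forms) on $\boldsymbol{L}^2$, they are weakly lower semicontinuous; and $\Lambda$ is weakly continuous because it depends on $\boldsymbol{u}$ only through $\boldsymbol{L}^2$/trace terms (compact embeddings) and on $\boldsymbol{Q}$ through the $\boldsymbol{L}^2$-continuous bounded operators $\Pi_\omega,\Pi_{\partial\omega_f}$. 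Therefore $I(\hat{\boldsymbol{y}},\hat{\boldsymbol{Q}})\le\liminf_k I(\boldsymbol{y}^{(k)},\boldsymbol{Q}^{(k)})=\inf_{\mathcal{A}}I$, and $(\hat{\boldsymbol{y}},\hat{\boldsymbol{Q}})$ is a minimizer.

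I expect the \textbf{main obstacle} to be the coercivity argument for the rotation field: one must argue that controlling the $\boldsymbol{L}^2$-norm of the wryness-type tensor $\boldsymbol{K}$ (equivalently the $\boldsymbol{L}^2$-norm of $\boldsymbol{Q}^T\boldsymbol{Q}_{,\alpha}$, which is skew) actually controls $\norm{\boldsymbol{Q}}_{H^1}$ — this uses crucially that $\boldsymbol{Q}$ is pointwise bounded because it is $SO(3)$-valued, so that $\norm{\boldsymbol{Q}_{,\alpha}}=\norm{(\boldsymbol{Q}^T\boldsymbol{Q}_{,\alpha})}$ (orthogonal invariance of the Frobenius norm) and the zeroth-order term is automatic. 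A secondary delicate point is verifying that weak convergence is genuinely preserved under the nonlinear products defining $\boldsymbol{E}$ and $\boldsymbol{K}$; this is where the compactness from Rellich's theorem (giving a.e. convergence of $\boldsymbol{Q}^{(k)}$) is essential, since a product of two merely weakly convergent sequences need not converge weakly to the product of the limits.
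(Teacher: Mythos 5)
Your proposal is correct and follows essentially the same route as the paper: positive definiteness of the quadratic forms under \eqref{23} giving pointwise coercivity in $(\boldsymbol{E},\boldsymbol{K})$, the identities $\|\boldsymbol{E}\|^2=\sum_\alpha\|\boldsymbol{y}_{,\alpha}-\boldsymbol{d}_\alpha\|^2$ and $\|\boldsymbol{K}\|^2=\tfrac12\sum_\alpha\|\boldsymbol{Q}_{,\alpha}\|^2$ together with $\|\boldsymbol{Q}\|^2=3$ and Poincar\'e to bound minimizing sequences in $\boldsymbol{H}^1$, passage to the limit in the products $\boldsymbol{Q}^{k,T}\boldsymbol{y}^k_{,\alpha}$ and $\boldsymbol{Q}^k_{,\alpha}\boldsymbol{Q}^{k,T}$ via strong $\boldsymbol{L}^2$ convergence of the bounded factor against weak convergence of the gradients, and weak lower semicontinuity of the convex quadratic energy. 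The only cosmetic differences are your explicit trace/deviatoric/skew diagonalization justifying the equivalence of \eqref{23} with positive definiteness (which the paper asserts without detail) and your use of a.e.\ convergence rather than the paper's $\boldsymbol{L}^1$ product argument to show the limit rotation stays in $SO(3)$.
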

\begin{proof} To prove this assertion, we apply the direct methods of the calculus of variations. First, we observe that for any $\boldsymbol{Q}\in SO(3)$ we have $\|\boldsymbol{Q}\|^2=3$ and, hence, $\|\boldsymbol{Q}\|_{L^2(\omega)}$  is bounded independent of $\boldsymbol{Q}$. In view of the conditions \eqref{22}$_{1,2}$ and the boundedness of $\,\Pi_\omega $ and $\,\Pi_{\partial\omega_f}\,$, we derive from \eqref{20}, \eqref{21} that there exist some positive constants $C_i>0$ such that
\begin{equation*}
\begin{array}{l}
    |\Lambda(\boldsymbol{u},\boldsymbol{Q})|\leq \|\boldsymbol{f}\|_{L^2(\omega)}\|\boldsymbol{u}\|_{L^2(\omega)} +\|\boldsymbol{n}^*\|_{L^2(\partial\omega_f)} \|\boldsymbol{u}\|_{L^2(\partial\omega_f)} +| \Pi_\omega(\boldsymbol{Q})|+ | \Pi_{\partial\omega_f}(\boldsymbol{Q})| \\
     \qquad \qquad\leq C_1 \|\boldsymbol{u}\|_{L^2(\omega)}+C_2\|\boldsymbol{u}\|_{H^1(\omega)} +C_3+C_4\,,
\end{array}
\end{equation*}
which means that there exists a constant $C>0$ with
\begin{equation}\label{24}
     |\Lambda(\boldsymbol{u},\boldsymbol{Q})|\leq \,\,C\,\big(\,\|\boldsymbol{y}\|_{H^1(\omega)}+1\big),\quad\forall\,(\boldsymbol{y},\boldsymbol{Q})\in\mathcal{A}.
\end{equation}

From \eqref{14}$_2$ we observe that $W_{\mathrm{mb}}(\boldsymbol{E})$ is a quadratic form in the strain variables $E_{i\alpha}$ ($i=1,2,3;\, \alpha=1,2$) given by \eqref{10}. More precisely, we may write
\begin{equation}\label{26}
\begin{array}{l}
    W_{\mathrm{mb}}(\boldsymbol{E})= \tilde{W}_{\mathrm{str}}(E_{i\alpha})=\frac{1}{2}(\alpha_1+\alpha_2+\alpha_3)(E_{11}^2+E_{22}^2)+\frac{1}{2}\,\alpha_3(E_{12}^2+E_{21}^2)\\
    \qquad\qquad\qquad\qquad\qquad+\frac{1}{2}\,\alpha_4 (E_{31}^2+E_{32}^2)+\alpha_1E_{11}E_{22}+\alpha_2E_{12}E_{21}\,.
\end{array}
\end{equation}
The quadratic form $\tilde{W}_{\mathrm{str}}(E_{i\alpha})$ given by \eqref{26} is positive definite if and only if the conditions \eqref{23}$_{1-4}$ on the coefficients $\alpha_k$ are satisfied. Then, by virtue of the relations \eqref{23}$_{1-4}$ we infer that there exists a constant $c_1>0$ such that
$$\tilde{W}_{\mathrm{str}}(E_{i\alpha})\geq c_1\sum_{i=1}^3 \sum_{\alpha=1}^2 E_{i\alpha}^2\,\,,\qquad \forall\,E_{i\alpha}\in\mathbb{R},$$
or, equivalently,
\begin{equation}\label{27}
    W_{\mathrm{mb}}(\boldsymbol{E})\geq  c_1\|\boldsymbol{E}\|^2, \qquad\forall\,\boldsymbol{E}=E_{i\alpha}\boldsymbol{e}_i\otimes\boldsymbol{e}_\alpha\,,\quad E_{i\alpha}\in\mathbb{R}.
\end{equation}
Analogously, from the conditions \eqref{23}$_{5-8}$ on the coefficients $\beta_k$ we deduce that there exists a constant $\bar{c}_1>0$ such that
\begin{equation}\label{28}
    W_{\mathrm{bend}}(\boldsymbol{K})\geq \bar{c}_1 \|\boldsymbol{K}\|^2, \qquad\forall\,\boldsymbol{K}=K_{i\alpha}\boldsymbol{e}_i\otimes\boldsymbol{e}_\alpha\,,\quad K_{i\alpha}\in\mathbb{R}.
\end{equation}
On the other hand, in view of \eqref{8} we observe that
$$\|\boldsymbol{E}\|^2= \mathrm{tr}(\boldsymbol{E}\boldsymbol{E}^T)=  \mathrm{tr}\big[(\boldsymbol{\varepsilon}_\alpha\cdot\boldsymbol{\varepsilon}_\beta) \boldsymbol{e}_\alpha\otimes \boldsymbol{e}_\beta\big]= \boldsymbol{\varepsilon}_\alpha\cdot\boldsymbol{\varepsilon}_\alpha = \boldsymbol{y}_{,\alpha}\cdot\boldsymbol{y}_{,\alpha}- 2 \boldsymbol{y}_{,\alpha}\cdot\boldsymbol{d}_{\alpha} +2,
$$
since $\|\boldsymbol{d}_\alpha\|=1$. Then, the Cauchy-Schwarz inequality yields
\begin{equation*}
    \displaystyle{\int_\omega} \| \boldsymbol{E}\|^2\, \mathrm{d}\omega \geq \|\boldsymbol{y}_{,1}\|^2_{L^2(\omega)} +\|\boldsymbol{y}_{,2}\|^2_{L^2(\omega)} -2\sqrt{2a}\,\big( \, \|\boldsymbol{y}_{,1}\|^2_{L^2(\omega)} +\|\boldsymbol{y}_{,2}\|^2_{L^2(\omega)}\big)^{1/2}\! +2a, 
\end{equation*}
or
\begin{equation}\label{29}
    \|\boldsymbol{E}\|_{L^2(\omega)}^2 \geq \,\|\boldsymbol{F}\|_{L^2(\omega)}^2 -2\sqrt{2a}\,\,\|\boldsymbol{F}\|_{L^2(\omega)} +2a,
\end{equation}
where $a=\mathrm{area}(\omega)$ and $\boldsymbol{F}=\mathrm{Grad}_s\boldsymbol{y}=\boldsymbol{y}_{,\alpha}\otimes\boldsymbol{e}_{\alpha}$ is the surface gradient of deformation introduced previously.

We show now that the functional $I(\boldsymbol{y},\boldsymbol{Q})$ is bounded from below over $\mathcal{A}$. Indeed, for any $(\boldsymbol{y},\boldsymbol{Q})\in\mathcal{A}$ we  use consecutively the inequalities \eqref{24}, \eqref{27}, \eqref{29} to write
\begin{equation*}
\begin{array}{l}
    I(\boldsymbol{y},\boldsymbol{Q})\geq  \displaystyle{\int_\omega} W_{\mathrm{mb}}(\boldsymbol{E})\,\mathrm{d}\omega - \Lambda(\boldsymbol{u},\boldsymbol{Q}) \geq \displaystyle{\int_\omega} c_1\|\boldsymbol{E}\|^2\,\mathrm{d}\omega - C\big(\,\|\boldsymbol{y}\|_{H^1(\omega)}+1\big) \\
    \qquad\qquad \geq c_1 \big( \|\boldsymbol{F}\|_{L^2(\omega)}^2 -2\sqrt{2a}\,\,\|\boldsymbol{F}\|_{L^2(\omega)} +2a\big)  - C\big(\,\|\boldsymbol{y}\|_{H^1(\omega)}+1\big)
\end{array}
\end{equation*}
so that
\begin{equation}\label{30}
    I(\boldsymbol{y},\boldsymbol{Q})\geq\, k_1\, \|\boldsymbol{F}\|_{L^2(\omega)}^2 -k_2\,\|\boldsymbol{y}\|_{H^1(\omega)} - k_3\,,\qquad \forall\,(\boldsymbol{y},\boldsymbol{Q})\in\mathcal{A},
\end{equation}
for some constants $k_1,k_2,k_3$ with $k_1>0$, $k_2>0$. Using the Poincar\'e inequality for the field $\boldsymbol{y}-\boldsymbol{y}^*\in \boldsymbol{H}^1(\omega, \mathbb{R}^3)$ (with $\boldsymbol{y}-\boldsymbol{y}^*=\boldsymbol{0}$ on $\partial\omega_f$) in the form
$$ \|\,\mathrm{Grad}_s(\boldsymbol{y}-\boldsymbol{y}^*)\|^2_{L^2(\omega)}=  \|\, (\boldsymbol{y}-\boldsymbol{y}^*)_{,1}\|^2_{L^2(\omega)} + \|\, (\boldsymbol{y}-\boldsymbol{y}^*)_{,2}\|^2_{L^2(\omega)} \geq c_P^+ \|\, \boldsymbol{y}-\boldsymbol{y}^*\|^2_{H^1(\omega)}\,\,,
$$
then from \eqref{30} we deduce that there exist some constants $K_1>0, K_2>0$ and $K_3$ such that
\begin{equation}\label{31}
    I(\boldsymbol{y},\boldsymbol{Q})\geq\, K_1\, \|\boldsymbol{y}-\boldsymbol{y}^*\|_{H^1(\omega)}^2 -K_2\,\|\boldsymbol{y}-\boldsymbol{y}^*\|_{H^1(\omega)} - K_3\,,\qquad \forall\,(\boldsymbol{y},\boldsymbol{Q})\in\mathcal{A}.
\end{equation}
Relation \eqref{31} shows that $I$ is bounded from below over $\mathcal{A}$, and thus there exists an infimizing sequence $\big\{\boldsymbol{y}^k,\boldsymbol{Q}^k\big\}_{k=1}^{\,\infty}\subset\mathcal{A}$ with
\begin{equation}\label{32}
    \lim_{k\rightarrow\infty}I(\boldsymbol{y}^k,\boldsymbol{Q}^k)= \inf\{ \, I(\boldsymbol{y},\boldsymbol{Q})\,|\,\,(\boldsymbol{y},\boldsymbol{Q})\in\mathcal{A}\}.
\end{equation}
According to the hypotheses \eqref{22}$_{3,4}$ we have $I(\boldsymbol{y}^*,\boldsymbol{Q}^*)<\infty$. In view of \eqref{31}, we may write for $I(\boldsymbol{y}^k,\boldsymbol{Q}^k)$
\begin{equation}\label{33}
 \infty>   I(\boldsymbol{y}^*,\boldsymbol{Q}^*) \geq I(\boldsymbol{y}^k,\boldsymbol{Q}^k)\geq K_1\, \|\boldsymbol{y}^k-\boldsymbol{y}^*\|_{H^1(\omega)}^2 -K_2\,\|\boldsymbol{y}^k-\boldsymbol{y}^*\|_{H^1(\omega)} - K_3\,,\quad \forall\,k\in\mathbb{N},
\end{equation}
and hence, the sequence $\big\{ \boldsymbol{y}^k\big\}_{k=1}^{\infty}$ is bounded in $\boldsymbol{H}^1(\omega, \mathbb{R}^3)$.
Consequently, we can extract a subsequence of $\big\{ \boldsymbol{y}^k\big\}_{k=1}^{\infty}$, not relabeled, which converges weakly to an element $\hat{ \boldsymbol{y}}$ in $\boldsymbol{H}^1(\omega , \mathbb{R}^3)$, i.e.
\begin{equation}\label{34}
    \boldsymbol{y}^k  \rightharpoonup \hat{ \boldsymbol{y}} \quad\mathrm{in}\quad \boldsymbol{H}^1(\omega, \mathbb{R}^3),\quad \mathrm{for}\,\,\, k\rightarrow\infty,
\end{equation}
and moreover it converges strongly in $\boldsymbol{L}^2(\omega, \mathbb{R}^3)$ by Rellich's selection principle
\begin{equation}\label{35}
     \boldsymbol{y}^k \rightarrow\hat{ \boldsymbol{y}} \quad\mathrm{in}\quad \boldsymbol{L}^2(\omega, \mathbb{R}^3), \quad \mathrm{with}\,\,\, \hat{ \boldsymbol{y}}\in \boldsymbol{H}^1(\omega , \mathbb{R}^3).
\end{equation}
On the other hand, from \eqref{33} and \eqref{19}, \eqref{24} it follows that the sequence $\int_\omega W_{\mathrm{mb}}(\boldsymbol{E}^k) \mathrm{d}\omega$ is bounded independent of $k\in\mathbb{N}$. Then, from \eqref{27} we deduce that $\big\{ \boldsymbol{E}^k\big\}_{k=1}^{\infty}$ is a bounded sequence in $\boldsymbol{L}^2(\omega, \mathbb{R}^{3\times3})$. Therefore, there exists a subsequence (not relabeled) and an element $\hat{ \boldsymbol{E}}\in \boldsymbol{L}^2(\omega, \mathbb{R}^{3\times3})$ such that
\begin{equation}\label{36}
    \boldsymbol{E}^k  \rightharpoonup  \hat{\boldsymbol{E}} \quad\mathrm{in}\quad \boldsymbol{L}^2(\omega, \mathbb{R}^{3\times3}), \quad \mathrm{with}\quad \boldsymbol{E}^k= \boldsymbol{Q}^{k,T} \big[(\boldsymbol{y}^k_{,\alpha}-\boldsymbol{Q}^k\boldsymbol{e}_\alpha)\otimes \boldsymbol{e}_\alpha\big].
\end{equation}
Similarly, from \eqref{33} we deduce that $\int_\omega W_{\mathrm{bend}}(\boldsymbol{K}^k) \mathrm{d}\omega$ is a bounded sequence and, in view of \eqref{28}, the sequence $\big\{ \boldsymbol{K}^k\big\}_{k=1}^{\infty}$ is bounded in $\boldsymbol{L}^2(\omega, \mathbb{R}^{3\times3})$. Taking into account \eqref{9} and \eqref{5}, we observe that $\|\boldsymbol{K}\|^2=\frac{1}{2}\,(\|\boldsymbol{Q}_{,1}\|^2+\|\boldsymbol{Q}_{,2}\|^2)$ and $\|\boldsymbol{Q}\|^2=3$. Then, it follows that the sequences $\big\{\boldsymbol{Q}_{,\alpha}^k\big\}_{k=1}^{\infty}$ are bounded in $\boldsymbol{L}^2(\omega, \mathbb{R}^{3\times3})$ and, hence,
 $\big\{\boldsymbol{Q}^k\big\}_{k=1}^{\infty}$  is a bounded sequence in $\boldsymbol{H}^1(\omega, \mathbb{R}^{3\times3})$. Consequently, there exists a subsequence (not relabeled) and an element $\hat{\boldsymbol{Q}}\in\boldsymbol{H}^1(\omega, \mathbb{R}^{3\times3})$ such that
\begin{equation}\label{37}
    \boldsymbol{Q}^k \rightharpoonup       \hat{\boldsymbol{Q}}    \quad\mathrm{in}\quad \boldsymbol{H}^1(\omega, \mathbb{R}^{3\times3}) , \quad\mathrm{and}\quad          \boldsymbol{Q}^k  \rightarrow    \hat{\boldsymbol{Q}} \quad\mathrm{in}\quad \boldsymbol{L}^2(\omega, \mathbb{R}^{3\times3}).
\end{equation}

Moreover, we observe that  $\boldsymbol{L}^2(\omega,SO(3))$ is a closed subset of $\boldsymbol{L}^2(\omega, \mathbb{R}^{3\times3})$. Indeed, for any sequence $\big\{\boldsymbol{R}^k\big\}_{k=1}^{\infty}\subset \boldsymbol{L}^2(\omega,SO(3))$ which converges to an element  $\,\boldsymbol{R}\,$  in $\boldsymbol{L}^2(\omega, \mathbb{R}^{3\times3})$, one can show that $\,\, \boldsymbol{R}^k \boldsymbol{R}^T \rightarrow\id\,\,$ in $\boldsymbol{L}^2(\omega, \mathbb{R}^{3\times3})$, and also $\,\, \boldsymbol{R}^k \boldsymbol{R}^T \rightarrow\boldsymbol{R} \boldsymbol{R}^T\,\,$ in $\boldsymbol{L}^1(\omega, \mathbb{R}^{3\times3})$. It follows that $\,
\boldsymbol{R} \boldsymbol{R}^T=\id\,$ holds, which means that $\,\boldsymbol{R}\in \boldsymbol{L}^2(\omega,SO(3))$.

Consequently, from \eqref{37}$_2$ and $\,\boldsymbol{Q}^k\in \!\boldsymbol{L}^2(\omega,SO(3))$ we obtain that $\,\hat{\boldsymbol{Q}}\in \!\boldsymbol{L}^2(\omega,SO(3))$.

By virtue of the boundedness of $\big\{ \boldsymbol{K}^k\big\}_{k=1}^{\infty}$ in $\boldsymbol{L}^2(\omega, \mathbb{R}^{3\times3})$, there exists a subsequence (not relabeled) and an element $\hat{\boldsymbol{K}}\in\boldsymbol{L}^2(\omega, \mathbb{R}^{3\times3})$ such that
\begin{equation}\label{38}
\boldsymbol{K}^k \rightharpoonup \hat{\boldsymbol{K}}\quad\mathrm{in}\quad \boldsymbol{L}^2(\omega, \mathbb{R}^{3\times3}), \quad \mathrm{with}\quad  \boldsymbol{K}^k= \boldsymbol{Q}^{k,T} \big[\mathrm{axl}(\boldsymbol{Q}^k_{,\alpha}  \boldsymbol{Q}^{k,T} )\otimes \boldsymbol{e}_\alpha\big].
\end{equation}
Concerning the (weak) limits $\,\hat{\boldsymbol{y}}$, $\hat{\boldsymbol{Q}}$, $\hat{\boldsymbol{E}}\,$ and $\,\hat{\boldsymbol{K}}\,$ specified by relations \eqref{34}--\eqref{38}, it remains to show that they satisfy the equations
\begin{equation}\label{40}
     \hat{   \boldsymbol{E}}=
     \hat{\boldsymbol{Q}}^{T} \big[(\hat{\boldsymbol{y}}_{,\alpha}\!- \hat{\boldsymbol{Q}}\boldsymbol{e}_\alpha)\otimes \boldsymbol{e}_\alpha\big],\qquad
   \hat{ \boldsymbol{K}}= \hat{\boldsymbol{Q}}^T\big[\,\mathrm{axl}(\hat{ \boldsymbol{Q}}_{,\alpha}\hat{\boldsymbol{Q}}^T)\otimes\boldsymbol{e}_\alpha\big].
\end{equation}
Indeed, for any test function $\boldsymbol{\varphi}\in \boldsymbol{C}_0^\infty(\omega, \mathbb{R}^{3})$, we may write
\begin{equation*}
\begin{array}{l}
    \displaystyle{\int_\omega}\big(\boldsymbol{Q}^{k,T} \boldsymbol{y}_{,\alpha}^k - \hat{\boldsymbol{Q}}^T \hat{\boldsymbol{y}}_{,\alpha}\big)\cdot\boldsymbol{\varphi} \,\mathrm{d}\omega=
    \displaystyle{\int_\omega}\big[ \big( \boldsymbol{Q}^{k,T}- \hat{\boldsymbol{Q}}^T\big) \boldsymbol{y}_{,\alpha}^k +  \hat{\boldsymbol{Q}}^T \big( \boldsymbol{y}_{,\alpha}^k - \hat{\boldsymbol{y}}_{,\alpha}\big)\big]\cdot\boldsymbol{\varphi} \,\mathrm{d}\omega \vspace{2pt}\\
    =\displaystyle{\int_\omega}\big[ \,\big\langle \boldsymbol{Q}^{k}- \hat{\boldsymbol{Q}}\,,\,  \boldsymbol{y}_{,\alpha}^k \otimes\boldsymbol{\varphi}\rangle +  \big( \boldsymbol{y}_{,\alpha}^k - \hat{\boldsymbol{y}}_{,\alpha}\big) \cdot\hat{\boldsymbol{Q}}\boldsymbol{\varphi}\big]\mathrm{d}\omega  \\
    \leq\|\boldsymbol{Q}^{k}- \hat{\boldsymbol{Q}}\|_{L^2(\omega)}\,\|\boldsymbol{y}_{,\alpha}^k \otimes\boldsymbol{\varphi}\|_{L^2(\omega)} + \displaystyle{\int_\omega}  \big( \boldsymbol{y}_{,\alpha}^k - \hat{\boldsymbol{y}}_{,\alpha}\big) \cdot\hat{\boldsymbol{Q}}\boldsymbol{\varphi} \,\mathrm{d}\omega.
\end{array}
\end{equation*}
Taking into account \eqref{34}, \eqref{37}$_2$ and the boundedness of $\{\boldsymbol{y}^k\}$ in $\boldsymbol{H}^1(\omega, \mathbb{R}^3)$, we deduce that $\|\boldsymbol{Q}^{k}- \hat{\boldsymbol{Q}}\|_{L^2(\omega)}\rightarrow 0$, and $ \|\boldsymbol{y}_{,\alpha}^k \otimes\boldsymbol{\varphi}\|_{L^2(\omega)} $ is bounded, and $\boldsymbol{y}_{,\alpha}^k \rightharpoonup \hat{\boldsymbol{y}}_{,\alpha}$ in $\boldsymbol{L}^2(\omega, \mathbb{R}^3)$. Then, from the last inequality we obtain
\begin{equation}\label{38+1}
    \displaystyle{\int_\omega}\big(\boldsymbol{Q}^{k,T} \boldsymbol{y}_{,\alpha}^k \big)\cdot\boldsymbol{\varphi} \,\mathrm{d}\omega \longrightarrow
    \displaystyle{\int_\omega}\big( \hat{\boldsymbol{Q}}^T \hat{\boldsymbol{y}}_{,\alpha}\big)\cdot\boldsymbol{\varphi} \,\mathrm{d}\omega,\quad\forall\, \boldsymbol{\varphi}\in  \boldsymbol{C}_0^\infty(\omega,\mathbb{R}^3).
\end{equation}
But from \eqref{36} we see that the sequence $\big\{\boldsymbol{Q}^{k,T} \boldsymbol{y}_{,\alpha}^k \big\}_{k=1}^\infty$ admits a weak limit $\hat{\boldsymbol{\ell}}$ in $\boldsymbol{L}^2(\omega,\mathbb{R}^3)$.
By virtue of \eqref{38+1} we infer that this weak limit $\hat{\boldsymbol{\ell}}$ must coincide with $\hat{\boldsymbol{Q}}^T \hat{\boldsymbol{y}}_{,\alpha}\,$. Thus, we have proved the convergence
\begin{equation*}
    \boldsymbol{Q}^{k,T} \boldsymbol{y}_{,\alpha}^k   \rightharpoonup
     \hat{\boldsymbol{Q}}^T \hat{\boldsymbol{y}}_{,\alpha}\quad\mathrm{in}\quad \boldsymbol{L}^2(\omega,\mathbb{R}^3), \,\,\alpha=1,2,
\end{equation*}
or, equivalently,
\begin{equation}\label{39}
    \boldsymbol{Q}^{k,T} \big[(\boldsymbol{y}^k_{,\alpha}\!-\boldsymbol{Q}^k\boldsymbol{e}_\alpha)\otimes \boldsymbol{e}_\alpha\big]\rightharpoonup
     \hat{\boldsymbol{Q}}^{T}\big[(\hat{\boldsymbol{y}}_{,\alpha}\!- \hat{\boldsymbol{Q}}\boldsymbol{e}_\alpha)\otimes \boldsymbol{e}_\alpha\big]
       \quad\mathrm{in}\quad \boldsymbol{L}^2(\omega,\mathbb{R}^3) ,
\end{equation}
which means (in view of \eqref{36}) that the relation \eqref{40}$_1$ holds.

Next, in order to prove the relation \eqref{40}$_2$ we proceed analogously: for any test function $\boldsymbol{\phi}\in \boldsymbol{C}_0^\infty(\omega,\mathbb{R}^{3\times3})$, we have
\begin{equation*}
\begin{array}{l}
    \displaystyle{\int_\omega}\big\langle \boldsymbol{Q}^{k} \boldsymbol{Q}_{,\alpha}^{k,T} - \hat{\boldsymbol{Q}} \hat{\boldsymbol{Q}}^T_{,\alpha}  \,,\, \boldsymbol{\phi} \big\rangle \,\mathrm{d}\omega=
    \displaystyle{\int_\omega}\big\langle \big( \boldsymbol{Q}^{k}- \hat{\boldsymbol{Q}}\big) \boldsymbol{Q}_{,\alpha}^{k,T} +  \hat{\boldsymbol{Q}} \big( \boldsymbol{Q}_{,\alpha}^{k,T} - \hat{\boldsymbol{Q}}_{,\alpha}^T\big) \,,\, \boldsymbol{\phi} \,\big\rangle \,\mathrm{d}\omega \vspace{2pt}\\
    =\displaystyle{\int_\omega} \big\langle \boldsymbol{Q}^{k}- \hat{\boldsymbol{Q}}\,,\, \boldsymbol{\phi} \boldsymbol{Q}_{,\alpha}^k \, \rangle \,\mathrm{d}\omega +
    \displaystyle{\int_\omega}
     \big\langle \boldsymbol{Q}_{,\alpha}^{k,T} - \hat{\boldsymbol{Q}}_{,\alpha}^T  \,,\, \hat{\boldsymbol{Q}}^T \boldsymbol{\phi} \,\big\rangle \,\mathrm{d}\omega  \\
    \leq\|\boldsymbol{Q}^{k}- \hat{\boldsymbol{Q}}\|_{L^2(\omega)}\,\,\|\boldsymbol{\phi} \boldsymbol{Q}_{,\alpha}^k \, \|_{L^2(\omega)} +\displaystyle{\int_\omega}
     \big\langle \boldsymbol{Q}_{,\alpha}^{k} - \hat{\boldsymbol{Q}}_{,\alpha}  \,,\, \boldsymbol{\phi}^T\hat{\boldsymbol{Q}} \,\big\rangle \,\mathrm{d}\omega.
\end{array}
\end{equation*}
Since the sequence $\big\{\boldsymbol{Q}_{,\alpha}^k\big\}_{k=1}^\infty$ is bounded in $\boldsymbol{L}^2(\omega,\mathbb{R}^{3\times3})$, in view of \eqref{37} we derive that the right-hand side of the last inequality tends to zero for $k\rightarrow\infty$. Thus, we obtain
\begin{equation}\label{38+3}
    \displaystyle{\int_\omega}\big\langle \boldsymbol{Q}^{k} \boldsymbol{Q}_{,\alpha}^{k,T}\,,\, \boldsymbol{\phi} \big\rangle \,\mathrm{d}\omega \longrightarrow
    \displaystyle{\int_\omega}\big\langle  \hat{\boldsymbol{Q}} \hat{\boldsymbol{Q}}^T_{,\alpha}\,,\, \boldsymbol{\phi} \big\rangle \,\mathrm{d}\omega,\quad\forall \,\boldsymbol{\phi}\in  \boldsymbol{C}_0^\infty(\omega,\mathbb{R}^{3\times3}),\,\,\,\alpha=1,2.
\end{equation}
By virtue of the relations \eqref{5} and \eqref{38}$_2$ we can write
\begin{equation*}
\begin{array}{l}
    \|\boldsymbol{K}^k\|^2_{L^2(\omega)}= \|\mathrm{axl}(\boldsymbol{Q}^k_{,\alpha}  \boldsymbol{Q}^{k,T} )\otimes \boldsymbol{e}_\alpha\|^2_{L^2(\omega)}= \displaystyle{\sum_{\alpha=1}^2} \,\|\,\mathrm{axl}(\boldsymbol{Q}^k_{,\alpha}  \boldsymbol{Q}^{k,T} ) \|^2_{L^2(\omega)}\\
    \qquad\qquad\quad \!\!= \dfrac{1}{2}\, \displaystyle{\sum_{\alpha=1}^2} \,\,\| \boldsymbol{Q}^k  \boldsymbol{Q}_{,\alpha}^{k,T} \|^2_{L^2(\omega)}\,.
\end{array}
\end{equation*}
Since   $\big\{\boldsymbol{K}^k\big\}_{k=1}^\infty$ is bounded in $\boldsymbol{L}^2(\omega,\mathbb{R}^{3\times3})$, we deduce from the above relations that the sequences $\big\{ \boldsymbol{Q}^k  \boldsymbol{Q}_{,\alpha}^{k,T}\big\}_{k=1}^\infty\,$ are also bounded ($\alpha=1,2$) and, hence, they admit weak limits $\,\hat{\boldsymbol{\ell}}_\alpha$ in $\boldsymbol{L}^2(\omega,\mathbb{R}^{3\times3})$. Now, in view of \eqref{38+3}    we obtain that $\,\hat{\boldsymbol{\ell}}_\alpha= \hat{\boldsymbol{Q}} \hat{\boldsymbol{Q}}^T_{,\alpha}\,$, i.e.
\begin{equation*}
    \boldsymbol{Q}^k  \boldsymbol{Q}_{,\alpha}^{k,T}\,\rightharpoonup\, \hat{\boldsymbol{Q}} \hat{\boldsymbol{Q}}^T_{,\alpha}\,\quad \mathrm{in}\quad \boldsymbol{L}^2(\omega,\mathbb{R}^{3\times3}),
\end{equation*}
or
\begin{equation}\label{38+4}
    \mathrm{axl}(\boldsymbol{Q}^k_{,\alpha}  \boldsymbol{Q}^{k,T} )\otimes \boldsymbol{e}_\alpha \,\,\rightharpoonup\,\, \mathrm{axl}(\hat{ \boldsymbol{Q}}_{,\alpha}\hat{\boldsymbol{Q}}^T)\otimes\boldsymbol{e}_\alpha \,\quad \mathrm{in} \quad \boldsymbol{L}^2(\omega,\mathbb{R}^{3\times3}).
\end{equation}
Taking \eqref{37}$_2\,$, \eqref{38} and \eqref{38+4} into account, and repeating the above argument (with arbitrary test functions $\boldsymbol{\phi}\in \boldsymbol{C}_0^\infty(\omega ,\mathbb{R}^{3\times3})$, similar to \eqref{38+3}) it follows that
\begin{equation*}
    \boldsymbol{Q}^{k,T} \big[\mathrm{axl}(\boldsymbol{Q}^k_{,\alpha}  \boldsymbol{Q}^{k,T} )\otimes \boldsymbol{e}_\alpha\big]\,\, \rightharpoonup \,\,  \hat{\boldsymbol{Q}}^T\big[\,\mathrm{axl}(\hat{ \boldsymbol{Q}}_{,\alpha}\hat{\boldsymbol{Q}}^T)\otimes\boldsymbol{e}_\alpha\big] \,\quad \mathrm{in}\quad \boldsymbol{L}^2(\omega,\mathbb{R}^{3\times3}),
\end{equation*}
which means that the relation \eqref{40}$_2$ holds.

In the next step of the proof, we show the convexity of the strain energy density function. By virtue of the conditions on the constitutive coefficients \eqref{23}, the Hessian matrix of the quadratic form $W_{\mathrm{mb}}(\boldsymbol{E})$ in \eqref{26} is positive definite. Similar arguments hold for $W_{\mathrm{bend}}(\boldsymbol{K})$, and altogether  we obtain
\begin{equation}\label{44}
    W(\boldsymbol{E},\boldsymbol{K}) \,\,\,\,\text{is convex in}\,\,\,\, (\boldsymbol{E},\boldsymbol{K}).
\end{equation}
By  \eqref{36},\eqref{38} and \eqref{40} we deduce $\boldsymbol{E}^k \rightharpoonup \hat{\boldsymbol{E}} $ and  $ \boldsymbol{K}^k \rightharpoonup \hat{\boldsymbol{K}} $ in $\boldsymbol{L}^2(\omega)$, and from \eqref{44} we find that
\begin{equation}\label{45}
    \int_\omega W(\hat{\boldsymbol{E}},\hat{\boldsymbol{K}})\,\mathrm{d}\omega \leq \liminf_{k\to\infty}  \int_\omega W( {\boldsymbol{E}^k}, {\boldsymbol{K}^k})\,\mathrm{d}\omega.
\end{equation}
If we denote by $\boldsymbol{u}^k=\boldsymbol{y}^k-\boldsymbol{x}$ , $\hat{\boldsymbol{u}}=\hat{\boldsymbol{y}}-\boldsymbol{x}$, then from \eqref{20}--\eqref{22},\eqref{35},\eqref{37}$_2$ and the continuity of $\Pi_\omega\,$, $\Pi_{\partial\omega_f}$ it follows
\begin{equation}\label{46}
    \lim_{k\to\infty} \Lambda(\boldsymbol{u}^k, \boldsymbol{Q}^k)=\Lambda(\hat{\boldsymbol{u}}, \hat{\boldsymbol{Q}}).
\end{equation}
Since the pairs $(\boldsymbol{y}^k, \boldsymbol{Q}^k)$ satisfy the boundary conditions on $\partial\omega_d\,$, we deduce in view of the
convergence relations \eqref{34}, \eqref{35}, \eqref{37} and the compact embedding in the sense of traces, that $\hat{\boldsymbol{y}}=\boldsymbol{y}^*,\,\, \hat{\boldsymbol{Q}}=\boldsymbol{Q}^*$ on  $\partial\omega_d\,$. Hence, we have $(\hat{\boldsymbol{y}}, \hat{\boldsymbol{Q}})\in\mathcal{A}$.

Finally, from \eqref{32},\eqref{45} and \eqref{46} we obtain $I(\hat{\boldsymbol{y}},\hat{\boldsymbol{Q}})\leq \inf \{ I(\boldsymbol{y},\boldsymbol{Q})\,|\,(\boldsymbol{y},\boldsymbol{Q})\in \mathcal{A}\}$, which means that $(\hat{\boldsymbol{y}},\hat{\boldsymbol{Q}})$ is a minimizer of the functional $I$ over $\mathcal{A}$. The proof is complete.
\hfill\end{proof}

\begin{remark}\label{rem3}
We observe that $\hat{\boldsymbol{Q}}\in\boldsymbol{L}^\infty(\omega,SO(3))$ since $\|\hat{\boldsymbol{Q}}\|^2=3$, but the rotation $\hat{\boldsymbol{Q}}$ may fail to be continuous, according to the limit case of the Sobolev embedding. Also, the solution deformation  $\,\hat{\boldsymbol{y}}\in \boldsymbol{H}^1(\omega,\mathbb{R}^{3})$ may fail to be continuous. This is indeed an advantage of our formulation, since the results are applicable to a large class of  shells (such as e.g., shells with singularity lines), and the minimizing solutions are not restricted by too  strong regularity conditions.
\end{remark}
\begin{remark}\label{rem4}
We notice that the conditions on the constitutive coefficients \eqref{23} are satisfied for the  particular model of isotropic plates presented in Remark \ref{rem1}. Indeed, taking into account the identification \eqref{17}, we find that the inequalities \eqref{23} reduce to
$$\mu>0,\quad 2\mu + 3\lambda>0.$$
These conditions are satisfied in view of the positive definiteness of the three-dimensional quadratic elastic strain energy density for isotropic materials. Thus, the existence result given by Theorem \ref{th1} applies to the particular   plate model presented in  \cite{Pietraszkiewicz-book04,Pietraszkiewicz10}.
\end{remark}

\section{Generalization of existence result}\label{sect4}

In this section we present some variants and generalizations of Theorem \ref{th1}.

We observe that the boundary conditions imposed on the rotation $\boldsymbol{Q}$ can be relaxed or even omitted in the definition of the admissible set \eqref{18}. For a discussion of some possible alternative boundary conditions for the rotation field $\boldsymbol{Q}$ on $\partial\omega_d$ we refer to the works \cite{Neff_plate04_cmt,Neff_plate07_m3as}.
In this line of thought, we present next the existence result corresponding to a larger admissible set.
\begin{theorem}\label{th2}
Consider the minimization problem \eqref{19}, over the admissible set
\begin{equation}\label{47}
    \widetilde{\mathcal{A}} =\big\{(\boldsymbol{y},\boldsymbol{Q})\in\boldsymbol{H}^1(\omega,\mathbb{R}^3)\times\boldsymbol{H}^1(\omega, SO(3))\,\,\, \big|\,\, \,\, \boldsymbol{y}_{\big| \partial\omega_d}=\boldsymbol{y}^*  \big\}.
\end{equation}
If the external loads $\boldsymbol{f}$, $\boldsymbol{n}^*$ and the boundary data $\boldsymbol{y}^*$ satisfy the conditions \eqref{22}$_{1-3}$ and the constitutive coefficients $\alpha_k\,,\,\beta_k$  verify the inequalities \eqref{23}, then the minimization problem \eqref{19}, \eqref{47} admits at least one minimizing solution pair $(\hat{\boldsymbol{y}},\hat{\boldsymbol{Q}})\in\widetilde{\mathcal{A}}$.
\end{theorem}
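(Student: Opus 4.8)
The plan is to repeat, almost verbatim, the argument of Theorem~\ref{th1}, after observing that the boundary prescription $\boldsymbol{Q}=\boldsymbol{Q}^*$ on $\partial\omega_d$ entered that proof in \emph{exactly one} place — the verification that the weak limit lies in the admissible set — and was used neither for coercivity nor for the compactness of the rotation field. Since $\widetilde{\mathcal{A}}\supset\mathcal{A}$ differs only by dropping that constraint, the whole machinery carries over.

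First I would establish the lower bound. As in Theorem~\ref{th1}, the estimate \eqref{24} on the load potential and the coercivity inequalities \eqref{27}, \eqref{29} for the membrane part of the energy give, for every $(\boldsymbol{y},\boldsymbol{Q})\in\widetilde{\mathcal{A}}$, a bound of the form \eqref{30}; since the only boundary condition retained in $\widetilde{\mathcal{A}}$ is still $\boldsymbol{y}=\boldsymbol{y}^*$ on $\partial\omega_d$, the Poincar\'e inequality applies to $\boldsymbol{y}-\boldsymbol{y}^*$ exactly as before and yields \eqref{31} with $\widetilde{\mathcal{A}}$ in place of $\mathcal{A}$. Hence $I$ is bounded below on $\widetilde{\mathcal{A}}$ and an infimizing sequence $\{(\boldsymbol{y}^k,\boldsymbol{Q}^k)\}\subset\widetilde{\mathcal{A}}$ exists. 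To see that the infimum is finite one uses the comparison pair $(\boldsymbol{y}^*,\id)\in\widetilde{\mathcal{A}}$: the constant rotation $\id$ lies in $\boldsymbol{H}^1(\omega,SO(3))$, has vanishing bending tensor, and $\boldsymbol{y}^*\in\boldsymbol{H}^1(\omega,\mathbb{R}^3)$ by \eqref{22}$_3$, so $I(\boldsymbol{y}^*,\id)<\infty$. This is precisely why the hypothesis \eqref{22}$_4$ on $\boldsymbol{Q}^*$ is not needed here.

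Next I would extract weakly convergent subsequences exactly as in \eqref{34}--\eqref{38}: from \eqref{31} the sequence $\{\boldsymbol{y}^k\}$ is bounded in $\boldsymbol{H}^1(\omega,\mathbb{R}^3)$; from the finiteness of the energy together with \eqref{27}, \eqref{28} the sequences $\{\boldsymbol{E}^k\}$, $\{\boldsymbol{K}^k\}$ are bounded in $\boldsymbol{L}^2(\omega,\mathbb{R}^{3\times3})$; and since $\|\boldsymbol{K}\|^2=\tfrac12(\|\boldsymbol{Q}_{,1}\|^2+\|\boldsymbol{Q}_{,2}\|^2)$ and $\|\boldsymbol{Q}^k\|^2=3$, the rotations $\{\boldsymbol{Q}^k\}$ are bounded in $\boldsymbol{H}^1(\omega,\mathbb{R}^{3\times3})$ — note this uses no boundary data on $\boldsymbol{Q}$. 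Passing to subsequences gives $\boldsymbol{y}^k\rightharpoonup\hat{\boldsymbol{y}}$ in $\boldsymbol{H}^1$, $\boldsymbol{Q}^k\rightharpoonup\hat{\boldsymbol{Q}}$ in $\boldsymbol{H}^1$ with strong $\boldsymbol{L}^2$ convergence of both, and $\boldsymbol{E}^k\rightharpoonup\hat{\boldsymbol{E}}$, $\boldsymbol{K}^k\rightharpoonup\hat{\boldsymbol{K}}$ in $\boldsymbol{L}^2$; the closedness of $\boldsymbol{L}^2(\omega,SO(3))$ in $\boldsymbol{L}^2(\omega,\mathbb{R}^{3\times3})$ then forces $\hat{\boldsymbol{Q}}\in\boldsymbol{L}^2(\omega,SO(3))$. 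The identification of the limits, i.e.\ that $\hat{\boldsymbol{E}}$ and $\hat{\boldsymbol{K}}$ are given by \eqref{40} in terms of $(\hat{\boldsymbol{y}},\hat{\boldsymbol{Q}})$, is carried out by the same test-function computation as in Theorem~\ref{th1} — the crucial ingredients being $\|\boldsymbol{Q}^k-\hat{\boldsymbol{Q}}\|_{L^2(\omega)}\to0$ combined with the weak $\boldsymbol{L}^2$ convergence of $\boldsymbol{y}^k_{,\alpha}$ and $\boldsymbol{Q}^k_{,\alpha}$ — and is entirely unaffected by the enlargement of the admissible set.

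Finally, convexity of $W$ in $(\boldsymbol{E},\boldsymbol{K})$, guaranteed by \eqref{23} through positive definiteness of the two quadratic forms, yields the weak lower semicontinuity \eqref{45}, while continuity of $\Lambda$ (hypotheses \eqref{22}$_{1-3}$ and continuity of $\Pi_\omega$, $\Pi_{\partial\omega_f}$), together with \eqref{35} and the strong $\boldsymbol{L}^2$ convergence of $\boldsymbol{Q}^k$, gives $\Lambda(\boldsymbol{u}^k,\boldsymbol{Q}^k)\to\Lambda(\hat{\boldsymbol{u}},\hat{\boldsymbol{Q}})$ as in \eqref{46}. The only change relative to Theorem~\ref{th1} is in checking admissibility of the limit: trace continuity with \eqref{34}, \eqref{35} still gives $\hat{\boldsymbol{y}}=\boldsymbol{y}^*$ on $\partial\omega_d$, and since $\widetilde{\mathcal{A}}$ imposes nothing on $\boldsymbol{Q}$ there, we immediately get $(\hat{\boldsymbol{y}},\hat{\boldsymbol{Q}})\in\widetilde{\mathcal{A}}$ with nothing further to verify. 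Combining with \eqref{32} gives $I(\hat{\boldsymbol{y}},\hat{\boldsymbol{Q}})\le\inf_{\widetilde{\mathcal{A}}}I$, so the pair is a minimizer. Since every step is inherited from Theorem~\ref{th1}, there is no genuine obstacle; the one point requiring care is supplying a finite-energy comparison element in $\widetilde{\mathcal{A}}$ without invoking \eqref{22}$_4$, which the choice $(\boldsymbol{y}^*,\id)$ resolves.
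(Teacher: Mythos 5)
Your proof is correct and follows exactly the route the paper takes: its own proof of Theorem~\ref{th2} consists of the single remark that the argument of Theorem~\ref{th1} carries over because the condition $\boldsymbol{Q}=\boldsymbol{Q}^*$ on $\partial\omega_d$ played no essential role there. Your extra observation that the finite-energy comparison pair $(\boldsymbol{y}^*,\boldsymbol{Q}^*)$ must be replaced (e.g.\ by $(\boldsymbol{y}^*,\id)$) because \eqref{22}$_4$ is no longer assumed is a detail the paper glosses over, and you handle it correctly.
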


\begin{proof}The proof can be achieved in a similar manner as the proof of Theorem \ref{th1}, where the boundary condition $\boldsymbol{Q}=\boldsymbol{Q}^*$ on $\partial\omega_d$ has not played an important role.
{}\hfill\end{proof}
\bigskip

The Theorem \ref{th1} is concerned with isotropic plates for which the strain energy density $W( {\boldsymbol{E}}, {\boldsymbol{K}})$ is  given by relations \eqref{14}.
We can generalize this existence result to the  case of anisotropic non-linear plates, provided the function $W$ satisfies the conditions of convexity and coercivity:
\begin{theorem}\label{th3}
\textrm{\textbf{(Anisotropic plates)}} Consider the minimization problem \eqref{18}, \eqref{19} associated to the deformation of  anisotropic  plates, and assume that the external loads and boundary data satisfy the conditions \eqref{22}. Assume that the strain energy density $W( {\boldsymbol{E}}, {\boldsymbol{K}})$ is an arbitrary quadratic convex function in $( {\boldsymbol{E}}, {\boldsymbol{K}})$, and moreover $W$ is coercive, in the sense that
\begin{equation}\label{48}
    W( {\boldsymbol{E}}, {\boldsymbol{K}})\geq k\big(    \|\boldsymbol{E}\|^2 +  \|\boldsymbol{K}\|^2\big),
        \quad\forall\,\boldsymbol{E}=E_{i\alpha}\boldsymbol{e}_i\otimes\boldsymbol{e}_\alpha, \, \boldsymbol{K}=K_{i\alpha}\boldsymbol{e}_i\otimes\boldsymbol{e}_\alpha,\quad \!\!
    E_{i\alpha}, K_{i\alpha}\in\mathbb{R},
\end{equation}
for some constant $k> 0$. Then, the minimization problem \eqref{18}, \eqref{19} admits at least one minimizing solution pair $(\hat{\boldsymbol{y}},\hat{\boldsymbol{Q}})\in \mathcal{A}$.
\end{theorem}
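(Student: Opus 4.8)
The plan is to follow verbatim the structure of the proof of Theorem~\ref{th1}, observing that the only place where the explicit isotropic form \eqref{14} of $W$ entered was in deriving the coercivity estimates \eqref{27}, \eqref{28} and the convexity \eqref{44}; both of these are now granted directly by hypothesis (the coercivity \eqref{48} and the assumption that $W$ is quadratic convex). First I would note that \eqref{48} immediately yields, for the sequence of strains, that $W(\boldsymbol{E},\boldsymbol{K})\geq k\|\boldsymbol{E}\|^2$ and $W(\boldsymbol{E},\boldsymbol{K})\geq k\|\boldsymbol{K}\|^2$, so the inequalities \eqref{27}, \eqref{28} hold with $c_1=\bar c_1=k$. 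The bound \eqref{24} on the load potential $\Lambda$ is unchanged, since it used only \eqref{22}$_{1,2}$ and the boundedness of $\Pi_\omega,\Pi_{\partial\omega_f}$. Combining these exactly as before gives the lower bound \eqref{30}, then \eqref{31} via the Poincar\'e inequality, so that $I$ is bounded below on $\mathcal{A}$ and an infimizing sequence $\{(\boldsymbol{y}^k,\boldsymbol{Q}^k)\}\subset\mathcal{A}$ exists.

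Next I would extract the weakly convergent subsequences exactly as in \eqref{32}--\eqref{38}: the estimate \eqref{31} bounds $\{\boldsymbol{y}^k\}$ in $\boldsymbol{H}^1(\omega,\mathbb{R}^3)$; the coercivity in $\boldsymbol{E}$ bounds $\{\boldsymbol{E}^k\}$ in $\boldsymbol{L}^2$; the coercivity in $\boldsymbol{K}$ bounds $\{\boldsymbol{K}^k\}$ in $\boldsymbol{L}^2$, hence (using $\|\boldsymbol{K}\|^2=\tfrac12(\|\boldsymbol{Q}_{,1}\|^2+\|\boldsymbol{Q}_{,2}\|^2)$ and $\|\boldsymbol{Q}\|^2=3$) bounds $\{\boldsymbol{Q}^k\}$ in $\boldsymbol{H}^1(\omega,\mathbb{R}^{3\times3})$. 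We obtain weak limits $\hat{\boldsymbol{y}}, \hat{\boldsymbol{Q}}, \hat{\boldsymbol{E}}, \hat{\boldsymbol{K}}$ and strong $\boldsymbol{L}^2$ convergence of $\boldsymbol{y}^k$ and $\boldsymbol{Q}^k$ by Rellich. The closedness of $\boldsymbol{L}^2(\omega,SO(3))$ in $\boldsymbol{L}^2(\omega,\mathbb{R}^{3\times3})$ gives $\hat{\boldsymbol{Q}}\in\boldsymbol{L}^2(\omega,SO(3))$, and the trace argument gives $(\hat{\boldsymbol{y}},\hat{\boldsymbol{Q}})\in\mathcal{A}$. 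The identification of the limits, namely that $\hat{\boldsymbol{E}}$ and $\hat{\boldsymbol{K}}$ are given by the nonlinear expressions \eqref{40} evaluated at $(\hat{\boldsymbol{y}},\hat{\boldsymbol{Q}})$, is carried out by precisely the same test-function computations as in \eqref{38+1}--\eqref{38+4}; nothing there used the form of $W$.

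The final step is lower semicontinuity: since $W$ is a convex (quadratic, hence continuous and bounded below) function of $(\boldsymbol{E},\boldsymbol{K})$, and $\boldsymbol{E}^k\rightharpoonup\hat{\boldsymbol{E}}$, $\boldsymbol{K}^k\rightharpoonup\hat{\boldsymbol{K}}$ in $\boldsymbol{L}^2(\omega)$, the standard weak lower semicontinuity theorem for convex integrands yields $\int_\omega W(\hat{\boldsymbol{E}},\hat{\boldsymbol{K}})\,\mathrm{d}\omega\leq\liminf_{k\to\infty}\int_\omega W(\boldsymbol{E}^k,\boldsymbol{K}^k)\,\mathrm{d}\omega$, exactly as in \eqref{45}. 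Together with $\lim_{k\to\infty}\Lambda(\boldsymbol{u}^k,\boldsymbol{Q}^k)=\Lambda(\hat{\boldsymbol{u}},\hat{\boldsymbol{Q}})$ (from strong $\boldsymbol{L}^2$ convergence and continuity of $\Pi_\omega,\Pi_{\partial\omega_f}$), this shows $I(\hat{\boldsymbol{y}},\hat{\boldsymbol{Q}})\leq\inf_{\mathcal{A}} I$, so $(\hat{\boldsymbol{y}},\hat{\boldsymbol{Q}})$ is a minimizer. I do not anticipate a genuine obstacle here: the content of the theorem is that coercivity~\eqref{48} plus quadratic convexity are exactly the two structural properties of \eqref{14} that the isotropic proof exploited, so the argument transfers essentially word for word; the only point deserving a sentence of care is that quadratic convex $W$ satisfies the growth bounds needed for the De~Giorgi--Ioffe type lower semicontinuity result and that $\boldsymbol{E}^k,\boldsymbol{K}^k$ indeed converge weakly in $\boldsymbol{L}^2$, both of which are already in hand.
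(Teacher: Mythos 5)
Your proposal is correct and follows essentially the same route as the paper: the authors likewise observe that the coercivity hypothesis \eqref{48} substitutes for the estimates \eqref{27}--\eqref{28} to recover \eqref{31}, that the compactness and limit-identification steps \eqref{32}--\eqref{40} carry over unchanged, and that the assumed convexity of $W$ supplies \eqref{44}--\eqref{46}. The paper's proof is just a terser version of what you wrote.
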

\begin{proof}
We follow the same steps as in the proof of Theorem \ref{th1}. In view of \eqref{48}, we show first that the estimate \eqref{31} holds also in our case. Then, there exists a minimizing sequence $\big\{\boldsymbol{y}^k,\boldsymbol{Q}^k\big\}^\infty_{k=1}\subset\mathcal{A}$, and we can prove similarly that it verifies the relations \eqref{32}-\eqref{40}. By virtue of our hypothesis, $W$ is  convex  in $( {\boldsymbol{E}}, {\boldsymbol{K}})$, so that the properties \eqref{44}-\eqref{46} are satisfied, and we can reach the conclusion of the theorem.
\hfill\end{proof}
\medskip

The Theorem \ref{th3} remains valid also for the minimization  problem written over the larger admissible set $\widetilde{\mathcal{A}}$ given by \eqref{47}, instead of the admissible set \eqref{18}.
\begin{remark}\label{rem5}
The 6-parametric theory of shells can be used to model also composite  thin elastic structures.  In this case, the internal energy density has a more complicated structure, and exhibits multiplicative coupling of the strain tensor ${\boldsymbol{E}}$ with the bending tensor ${\boldsymbol{K}}$, see e.g. \cite{Chroscielewski11,Kreja07}. Nevertheless, the Theorem \ref{th3} can be applied to deduce the existence of minimizers for layered composite plates, under appropriate conditions on the material/geometrical parameters \cite{Birsan-Neff-AnnRom12}.
\end{remark}

\section{Comparison with a Cosserat model for plates}\label{sect5}

In this section we present a comparison with the Cosserat model for plates (shells) proposed and investigated  by the second author in \cite{Neff_plate04_cmt,Neff_plate07_m3as}. This model is obtained by a consistent formal dimensional reduction of a finite-strain three-dimensional Cosserat (micropolar) model to the two-dimensional situation of thin plates.

Apart from the    differences in notations, there are essential similarities between this Cosserat plate model and the 6-parametric plate theory presented in Section \ref{sect1}. Firstly, in both approaches the primary independent kinematical variables are the deformation field $\boldsymbol{y}\in \mathbb{R}^3$ and the rotation tensor field $\boldsymbol{Q}\in SO(3)$, which are denoted in \cite{Neff_plate04_cmt,Neff_plate07_m3as} by $\boldsymbol{m}$ and $\overline{\boldsymbol{R}}$. Thus, we have the correspondence
$$\boldsymbol{y}=\boldsymbol{m},\qquad \boldsymbol{Q}=\overline{\boldsymbol{R}}.$$
The rotation field $\overline{\boldsymbol{R}}$ in this derivation approach is inherited from the parent Cosserat bulk model, which already includes a triad of rigidly rotating directors (the ``triedre mobile'', see \cite{Neff_Paris_Maugin09}).

Moreover, the measures of strain are essentially the same in the two approaches. Indeed, the so-called \emph{stretch tensor}  $\overline{\boldsymbol{U}}$ and the third order \emph{curvature tensor} $\boldsymbol{\mathfrak{K}}_s$ are introduced in \cite{Neff_plate04_cmt} Sect. 4, through the relations
\begin{equation*}
\begin{array}{l}
\overline{\boldsymbol{U}}\,= \overline{\boldsymbol{R}}^T(\boldsymbol{m}_{,\alpha}\otimes\boldsymbol{e}_\alpha+ \overline{\boldsymbol{R}}_3\otimes \boldsymbol{e}_3),
\\
\boldsymbol{\mathfrak{K}}_s= \big[\,\overline{\boldsymbol{R}}^T(\overline{\boldsymbol{R}}_{i,\alpha}\otimes \boldsymbol{e}_\alpha)\big]\otimes \boldsymbol{e}_i \,, \quad\mathrm{with}\quad \overline{\boldsymbol{R}}_i=\overline{\boldsymbol{R}}\, \boldsymbol{e}_i\,.
    \end{array}
\end{equation*}
If we compare these tensors with the strain measures ${\boldsymbol{E}}$ and  ${\boldsymbol{K}}$ defined in \eqref{8}-\eqref{11}, then we find that
$${\boldsymbol{E}}=\overline{\boldsymbol{U}}-\id\, ,$$
where $\id=\boldsymbol{e}_i\otimes\boldsymbol{e}_i$ is the identity tensor.
It can be shown that the tensor $\boldsymbol{\mathfrak{K}}_s$ has only 6 distinct components which coincide with the 6 non-zero components of the bending tensor ${\boldsymbol{K}}$ given by \eqref{11}.

Let us compare now the expressions for the strain energy density. The strain energy density is assumed in the Cosserat plate model in the form
\begin{equation}\label{49}
\begin{array}{l}
    W(\overline{\boldsymbol{U}}, \boldsymbol{\mathfrak{K}}_s)= h\Big(\mu\,\|\mathrm{sym}(\overline{\boldsymbol{U}}-\id)\|^2 + \mu_c\|\mathrm{skew}(\overline{\boldsymbol{U}}-\id)\|^2+ \dfrac{\lambda\mu}{\lambda+2\mu}\, \big(\mathrm{tr}(\overline{\boldsymbol{U}}-\id)\big)^2\Big)   \\
    \quad + \dfrac{h^3}{12}\,\Big(\,\mu\|\mathrm{sym}\boldsymbol{\mathfrak{K}}_s^3\|^2 + \mu_c\|\mathrm{skew}\boldsymbol{\mathfrak{K}}_s^3\|^2+ \dfrac{\lambda\mu}{\lambda+2\mu}\, \big(\mathrm{tr}(\boldsymbol{\mathfrak{K}}_s^3)\big)^2\Big)\\
    \quad +\dfrac{ hL_c^{1+p}\mu}{12}\,\big(1\!+\!a_4L_c^q\|\boldsymbol{\mathfrak{K}}_s\|^q\big)\displaystyle{\sum_{i=1}^3}\Big( a_5
    \|\mathrm{sym}\boldsymbol{\mathfrak{K}}_s^i\|^2 + a_6\|\mathrm{skew}\boldsymbol{\mathfrak{K}}_s^i\|^2+ a_7 \big(\mathrm{tr}(\boldsymbol{\mathfrak{K}}_s^i)\big)^2\Big)^{\frac{1+p}{2}},
\end{array}
\end{equation}
where $\boldsymbol{\mathfrak{K}}_s^i= \boldsymbol{\mathfrak{K}}_s\boldsymbol{e}_i\,$, and $\mu_c\geq 0$ is the Cosserat couple modulus, $L_c$ is an internal length, and $a_4,...,a_7$ are constitutive coefficients introduced in \cite{Neff_plate04_cmt}. The exponents $p$ and $q$  are such that $p\geq 1$, $q\geq 0$. The existence of minimizers is proved for the case of the strain energy \eqref{49} under the simple conditions
\begin{equation}\label{50}
    \mu_c>0,\quad \mu>0,\quad \lambda>0,\quad a_5>0,\quad  a_6>0,\quad a_7\geq 0.
\end{equation}
If we choose the exponent $p=1$ and the parameter $a_4=0$ in \eqref{49}, then the expressions of the strain energy density in the two approaches coincide, in terms of the independent kinematical variables $(\boldsymbol{y},\boldsymbol{Q})=(\boldsymbol{m}, \overline{\boldsymbol{R}})$. To realize the coincidence of the two strain energy functions \eqref{14} and \eqref{49} in this case, we need to identify the set of constitutive coefficients $(\alpha_1,...,\alpha_4,\beta_1,...,\beta_4)$ from \eqref{14} with the set of  parameters $(\lambda,\mu,\mu_c,L_c, a_5, a_6, a_7)$ in the following way
\begin{equation}\label{51}
\begin{array}{l}
    \alpha_1=h\,\dfrac{2\lambda\mu}{\lambda+2\mu}\,,\quad \alpha_2=h(\mu-\mu_c),\quad \alpha_3=h(\mu+\mu_c),\quad \alpha_4=\kappa h (\mu+\mu_c),\vspace{3pt}\\
    \beta_1=-\dfrac{h}{12}\,\big[h^2(\mu-\mu_c)+\mu L_c^2\, (a_5-a_6)\big],\quad\, \, \beta_2=-\dfrac{h\mu}{6}\,\big(h^2\dfrac{\lambda}{\lambda\!+\!2\mu} + L_c^2\, a_7\big),\vspace{3pt}\\
    \beta_3=\dfrac{h\mu}{6}\,\big[h^2\dfrac{2(\lambda\!+\!\mu)}{\lambda\!+\!2\mu} + L_c^2\, (\frac{3}{2}\,a_5+\frac{1}{2}\,a_6+ a_7)\big], \quad
    \beta_4=\dfrac{h\mu}{6}\,  L_c^2 \,(\frac{3}{2}\,a_5+\frac{1}{2}\,a_6+ a_7),
\end{array}
\end{equation}
where $\kappa$ is the formal shear correction factor. We observe that
\begin{equation}\label{52}
    \alpha_3-\alpha_2=2h\,\mu_c\,\,,
\end{equation}
so that the condition $\mu_c>0$ considered in \eqref{50}$_1$ corresponds to $\alpha_3-\alpha_2>0$, assumed in the conditions \eqref{23} which ensure the positive definiteness of $W$. The interesting degenerate case $\mu_c=0$ is investigated in details in \cite{Neff_plate07_m3as}. In view of \eqref{52}, this case corresponds to $\alpha_3-\alpha_2=0$, when the energy function $W$ is only positive semi-definite, and the proof of the existence results is more delicate \cite{Neff_plate07_m3as}.

There are also some differences between the two approaches, such as for instance the form of the boundary conditions and the expression of the external loading potential $\Lambda( {\boldsymbol{u}},  {\boldsymbol{Q}})$. The proof of Theorem \ref{th1} follows the same steps as the proof of Theorem 4.1 in \cite{Neff_plate04_cmt}. However, the conditions on the constitutive coefficients \eqref{50} imposed in  \cite{Neff_plate04_cmt} are more restrictive than the conditions \eqref{23} in Theorem \ref{th1}.
For example, in the case of the isotropic plate model described in Remark \ref{rem1}, the constitutive coefficients \eqref{17} satisfy the existence conditions \eqref{23} stated by Theorem \ref{th1}, but they do not verify the restrictions \eqref{50} (in virtue of the identification \eqref{51}). This illustrates the fact that the  conditions for the constitutive coefficients established in the present work are less restrictive.

Finally, we remark that the form of the strain energy density $W$ given by \eqref{49} is more general than \eqref{14}. Indeed, the expression \eqref{14} of $W$ can be obtained from \eqref{49} if we choose the parameters $p=1$ and $a_4=0$. Nevertheless, the existence of minimizers is proved in  \cite{Neff_plate04_cmt} for any exponents $p$, $q$ and coefficient $a_4$ satisfying the conditions: $p\geq 1$, $q\geq 0$, and $a_4\geq 0$.

In a future contribution we will extend our results to the general case of 6-parameter shells.

\bigskip\bigskip
\small{\textbf{Acknowledgements.}
The first author (M.B.) is supported by the german state  grant: ``Programm des Bundes und der L\"ander f\"ur bessere Studienbedingungen und mehr Qualit\"at in der Lehre''. Useful discussions with Professor V.A. Eremeyev are gratefully acknowledged.


\bibliographystyle{plain} 
{\footnotesize
\bibliography{literatur_Birsan}
}


\end{document}